\newcommand{\red}{\textcolor{red}}
\newenvironment{customthm}[1]
  {\innercustomthm}
  {\endinnercustomthm}
\newtheorem{thm}{Theorem}[section]
\newtheorem{prop}[thm]{Proposition}
\newtheorem{cor}[thm]{Corollary}
\newtheorem{lem}[thm]{Lemma}
\theoremstyle{definition}
\newtheorem{define}[thm]{Definition}
\theoremstyle{remark}
\newtheorem{rem}{Remark}
\newcommand{\ve}[1]{\boldsymbol{\mathbf{#1}}}
\newcommand{\R}{\mathbb{R}}
\newcommand{\Z}{\mathbb{Z}}
\newcommand{\N}{\mathbb{N}}
\renewcommand{\d}{\partial}
\renewcommand{\subset}{\subseteq}
\renewcommand{\tilde}{\widetilde}
\renewcommand{\bar}{\overline}
\newcommand{\iso}{\cong}
\DeclareMathOperator{\br}{{br}}
\DeclareMathOperator{\Chain}{{Chain}}
\DeclareMathOperator{\cob}{{cob}}
\DeclareMathOperator{\Hom}{{Hom}}
\DeclareMathOperator{\id}{{id}}
\DeclareMathOperator{\Spin}{{Spin}}
\DeclareMathOperator{\Tor}{{Tor}}
\DeclareMathOperator{\rk}{{rk}}
\DeclareMathOperator{\Sym}{{Sym}}
\newcommand{\bF}{\mathbb{F}}
\newcommand{\bK}{\mathbb{K}}
\newcommand{\bL}{\mathbb{L}}
\newcommand{\bT}{\mathbb{T}}
\newcommand{\cA}{\mathcal{A}}
\newcommand{\cC}{\mathcal{C}}
\newcommand{\cF}{\mathcal{F}}
\newcommand{\cG}{\mathcal{G}}
\newcommand{\cM}{\mathcal{M}}
\newcommand{\cO}{\mathcal{O}}
\newcommand{\frs}{\mathfrak{s}}
\newcommand{\cHFK}{\mathcal{H\!F\! K}}
\newcommand{\cCFK}{\mathcal{C\!F\!K}}
\newcommand{\HF}{\mathit{HF}}
\newcommand{\CFK}{\mathit{CFK}}
\newcommand{\HFK}{\mathit{HFK}}
\newcommand{\CFL}{\mathit{CFL}}
\newcommand{\HFL}{\mathit{HFL}}
\newcommand{\Fus}{\cF\!\mathit{us}}
\newcommand{\cOrd}{\cO\mathit{rd}}
\newcommand{\xs}{\ve{x}}
\newcommand{\ys}{\ve{y}}
\newcommand{\zs}{\ve{z}}
\newcommand{\ws}{\ve{w}}
\newcommand{\as}{\ve{\alpha}}
\newcommand{\bs}{\ve{\beta}}
\renewcommand{\a}{\alpha}
\renewcommand{\b}{\beta}
\newcommand{\g}{\gamma}
\renewcommand{\red}{\mathrm{red}}
\DeclareMathOperator{\Ord}{{Ord}}
\title[Knot cobordisms and torsion in Floer homology]%
{Knot cobordisms, bridge index, and torsion in Floer homology}%
\author{Andr\'as Juh\'asz, Maggie Miller and Ian Zemke}
\thanks{AJ was was supported by a Royal Society Research Fellowship. MM was supported by NSF grant DGE-1656466. IZ was supported supported by NSF grant DMS-1703685. This project has received funding from the European Research Council (ERC)
under the European Union's Horizon 2020 research and innovation programme
(grant agreement No 674978).}
\subjclass{57R58, 57M27 (primary), 57R70, 57R40 (secondary)}%
\keywords{Heegaard Floer homology, knot cobordism, bridge index, ribbon cobordism, concordance, critical point}
\begin{document}
\maketitle

\begin{abstract}
Given a connected cobordism between two knots in the 3-sphere,
our main result is an inequality involving
torsion orders of the knot Floer homology of the knots,
and the number of local maxima and the genus of the cobordism.
This has several topological applications:
The torsion order gives lower bounds on the bridge index and
the band-unlinking number of a knot, the fusion number of a ribbon knot, and the number of minima appearing in a slice disk of a knot.
It also gives a lower bound on the number of
bands appearing in a ribbon concordance between two knots.
Our bounds on the bridge index and fusion number are sharp for
$T_{p,q}$ and $T_{p,q}\# \bar{T}_{p,q}$, respectively.
We also show that the bridge index of $T_{p,q}$ is minimal within its concordance class.

The torsion order bounds a refinement of the cobordism distance on knots,
which is a metric. As a special case, we can bound the number of band moves
required to get from one knot to the other. We show
knot Floer homology also gives a lower bound on Sarkar's ribbon distance, and exhibit
examples of ribbon knots with arbitrarily large ribbon distance from the unknot.
\end{abstract}

\section{Introduction}

The slice-ribbon conjecture is one of the key open problems in knot theory.
It states that every slice knot is ribbon; i.e., admits a slice disk on which the radial function of the $4$-ball induces no local maxima. It is clear from this conjecture that
being able to bound the possible number of critical points of various indices
on surfaces bounding knots is a hard and important question.
In this paper, we use the torsion order of knot Floer homology
to give bounds on the number of critical points appearing in knot cobordisms
connecting two knots. As applications, we consider knot invariants
that can be interpreted in terms of knot cobordisms, such as the band-unlinking number of knots,
and the fusion number of ribbon knots.

If $K$ is a knot in $S^3$, we write $\HFK^-(K)$ for the minus version of knot Floer homology,
which is a finitely generated module over the polynomial ring $\bF_2[v]$.
The module $\HFK^-(K)$ decomposes non-canonically as
\[
\HFK^-(K)\iso \bF_2[v]\oplus \HFK^-_{\red}(K),
\]
where $\HFK_{\red}^-(K)$ denotes the $\bF_2[v]$-torsion submodule of $\HFK^-(K)$.
 See Section~\ref{sec:background} for background on knot Floer homology and
the link Floer TQFT, which we use in
the proofs of our main results.

If $M$ is an $\bF_2[v]$-module, we define
\[
\Ord_v(M):=\min\left\{k\in \N: v^k\cdot \Tor(M)=0\right\}\in \N\cup \{\infty\}.
\]

\begin{define}
If $K$ is a knot in $S^3$, we define its \emph{torsion order} as
\[
\Ord_v(K):=\Ord_v(\HFK^-(K)).
\]
\end{define}

The module $\HFK^-_{\red}(K)$ is annihilated by the action of $v^k$ for sufficiently large $k$, so $\Ord_v(K)$ is always finite.
Our main result is the following:

\begin{thm}\label{thm:cobordism-bound}
Let $K_0$ and $K_1$ be knots in $S^3$.
Suppose there is a connected knot cobordism $S$ from $K_0$ to $K_1$ with $M$ local maxima. Then
\[
\Ord_v(K_0)\le \max\{M, \Ord_v(K_1)\} + 2g(S).
\]
\end{thm}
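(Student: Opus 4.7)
The plan is to apply the decorated link Floer TQFT discussed in Section~\ref{sec:background} to a Morse-theoretic decomposition of $S$, extract an $\bF_2[v]$-equivariant cobordism map between knot Floer homologies, and then quantify how $v$-torsion propagates through each elementary piece.

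First, I would fix a generic Morse function on $S$ producing the given $M$ local maxima, together with some $m$ local minima and $s$ saddles, subject to the Euler characteristic identity $s = m + M + 2g(S)$. After choosing a compatible system of dividing arcs, the TQFT assigns $\bF_2[v]$-linear cobordism maps to each elementary piece: a birth tensors with $\HFK^-(U) \iso \bF_2[v]$ and is a split injection, a band is a specific $\bF_2[v]$-linear chain map, and, under an appropriately chosen decoration, each death projects its unknot summand via the canonical quotient $\bF_2[v]\twoheadrightarrow \bF_2[v]/(v)$. Composing these produces the cobordism map $F_S\colon \HFK^-(K_0)\to \HFK^-(K_1)$.

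For any torsion class $x\in \Tor(\HFK^-(K_0))$, the image $F_S(x)\in \Tor \HFK^-(K_1)$ is killed by $v^{\Ord_v(K_1)}$, so by equivariance $v^{\Ord_v(K_1)} x$ lies in $\ker(F_S)$. The next step would be to show that on torsion classes, each death and each of the $2g(S)$ surplus saddles beyond the genus-$0$ handle count contributes at most one factor of $v$ to the kernel, so that $v^{M+2g(S)}$ annihilates $\ker(F_S) \cap \Tor(\HFK^-(K_0))$. Naively composing these two estimates yields only the weaker sum bound $\Ord_v(K_1)+M+2g(S)$; the main obstacle is to sharpen this to the stated $\max\{M, \Ord_v(K_1)\}+2g(S)$. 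I expect this to require a filtered refinement showing that the factor of $v$ introduced at each death can be absorbed into, rather than added to, the pre-existing torsion of $\HFK^-(K_1)$. Making this overlap precise through the intermediate multi-component link Floer homologies, which are modules over $\bF_2[v_1,\ldots,v_k]$ with richer torsion structure, will be the central technical difficulty. The $2g(S)$ contribution, by contrast, should follow from a routine stabilization-style computation showing that each pair of surplus saddles multiplies the torsion order of the kernel by at most $v$.
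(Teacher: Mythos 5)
Your proposal has a genuine gap, and you in fact flag it yourself: everything hinges on upgrading the additive bound $\Ord_v(K_1)+M+2g(S)$ to $\max\{M,\Ord_v(K_1)\}+2g(S)$, and you leave that step as an unspecified ``filtered refinement.'' The paper's proof does not work one-directionally with $F_S$ at all. Its key input (Proposition~\ref{prop:main-thm-maps}) is a doubling identity: decorate $S$ so the $\ws$-region is a neighborhood of an arc from $K_0$ to $K_1$, let $\bar{\cF}$ be the vertically mirrored cobordism, and prove
\[
v^{M}\cdot F_{\bar{\cF}}\circ F_{\cF} \;=\; v^{\,b-m}\cdot \id_{\HFK^-(K_0)},
\]
where $b$, $m$ are the numbers of saddles and minima. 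This is obtained by rearranging the movie of $S$, observing that deleting the death/birth pairs in the middle of $\bar{\cF}\circ\cF$ amounts to adding $M$ tubes (each worth a factor of $v$ by the stabilization lemma, Lemma~\ref{lem:stabilization}), similarly removing the $b-m$ tubes coming from the non-fusion saddles, and finally identifying what remains with the identity concordance with spheres tubed in, whose map is the identity by the proof of the ribbon concordance theorem of \cite{ZemRibbon}. The theorem then follows by pure algebra: for torsion $x$, equivariance kills $F_{\cF}(v^{l+M}x)$ once $l+M\ge\Ord_v(K_1)$, while the identity above forces $F_{\bar{\cF}\circ\cF}(v^{l+M}x)=v^{\,b-m+l}x$, so $v^{\,b-m+l}x=0$ with $l=\max\{0,\Ord_v(K_1)-M\}$, which is exactly the max-form bound.

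Without some substitute for this reflection argument, the intermediate claims in your outline are also unsupported. The assertion that each death and each ``surplus'' saddle contributes at most one factor of $v$ to $\ker(F_S)$ is not something you can verify piecewise: the elementary maps pass through link Floer homologies of multi-component links, band maps between such groups have no a priori control on their kernels, and the statement that $\ker(F_S)$ is annihilated by $v^{M+2g(S)}$ is, in the paper, a \emph{consequence} of the existence of a left inverse up to powers of $v$ (i.e., of the doubling identity), not an input one can establish death-by-death. Likewise, the mechanism by which the deaths' $v$-factors get ``absorbed'' into $\Ord_v(K_1)$ rather than added to it is precisely the interplay between the two sides of the displayed identity; I do not see how a filtration on the forward map alone could produce it. So the proposal identifies the right TQFT tools (equivariance, stabilization), but is missing the central idea and, as written, does not prove the theorem.
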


One particularly notable consequence (Collary~\ref{cor:bridge-number}) of this result is the inequality
\[
\Ord_v(K)\le \br(K)-1,
\]
where $\br(K)$ is the bridge index of the knot $K$. This is the first instance in the literature of knot Floer homology producing a lower bound on the bridge index of a knot.
We now describe further topological applications of Theorem~\ref{thm:cobordism-bound}.

\subsection{Ribbon concordances }

A knot concordance with no local maxima is called a \emph{ribbon concordance}. The
notion of ribbon concordance was introduced by Gordon \cite{Gordon}.  Suppose
there is a ribbon concordance from $K_0$ to $K_1$ with $b$
saddles. One implication of Theorem~\ref{thm:cobordism-bound} is that
$\Ord_v(K_0)\le \Ord_v(K_1)$, though  this also follows from previous
work of the third author \cite{ZemRibbon}*{Theorem~1.7}. If we reverse the
roles of $K_0$ and $K_1$ in Theorem~\ref{thm:cobordism-bound}, we get that
\[
\Ord_v(K_1) \le  \max\{b, \Ord_v(K_0) \}.
\]
Hence, we obtain the following:

\begin{cor}
  Suppose that there is a ribbon concordance from $K_0$ to $K_1$ with $b$ saddles.
  Then either $b \le \Ord_v(K_0) = \Ord_v(K_1)$, or $\Ord_v(K_0) \le \Ord_v(K_1) \le b$.
\end{cor}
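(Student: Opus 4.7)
The plan is to apply Theorem~\ref{thm:cobordism-bound} twice: once in the forward direction along the ribbon concordance, and once in the reverse direction, and then combine the two inequalities with a short case analysis. Most of the argument is in fact already spelled out in the paragraph preceding the corollary statement, so my job is mainly to fill in the critical-point bookkeeping needed to apply the theorem to the reversed cobordism.

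First, let $S$ be the ribbon concordance from $K_0$ to $K_1$, so $S$ is a connected genus-zero cobordism with $M = 0$ local maxima. Directly applying Theorem~\ref{thm:cobordism-bound} (or invoking \cite{ZemRibbon}*{Theorem~1.7}) yields
\[
\Ord_v(K_0) \le \max\{0, \Ord_v(K_1)\} + 2 g(S) = \Ord_v(K_1).
\]

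Next, I would consider the reversed cobordism $\bar S$, viewed as a connected genus-zero cobordism from $K_1$ to $K_0$. Under reversal, minima of $S$ become maxima of $\bar S$ (and saddles remain saddles), so I need to count the minima of $S$. Writing $m$, $s$, $M$ for the numbers of minima, saddles, and maxima of $S$, the fact that $S$ is a genus-zero concordance gives $\chi(S) = 0$, hence $m - s + M = 0$; since $M = 0$ and $s = b$, we get $m = b$. Thus $\bar S$ has exactly $b$ local maxima, and Theorem~\ref{thm:cobordism-bound} yields
\[
\Ord_v(K_1) \le \max\{b, \Ord_v(K_0)\}.
\]

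Finally, I would combine these two inequalities. If $b \le \Ord_v(K_0)$, then the second inequality collapses to $\Ord_v(K_1) \le \Ord_v(K_0)$, which together with the first gives $\Ord_v(K_0) = \Ord_v(K_1)$, placing us in the first alternative $b \le \Ord_v(K_0) = \Ord_v(K_1)$. If instead $b > \Ord_v(K_0)$, the second inequality reads $\Ord_v(K_1) \le b$, which combined with the first gives $\Ord_v(K_0) \le \Ord_v(K_1) \le b$, the second alternative.

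There is no real obstacle here: the essential content of the corollary is already contained in the two applications of Theorem~\ref{thm:cobordism-bound} flagged in the surrounding text, and the only item requiring verification is the Euler-characteristic count identifying the number of local maxima of $\bar S$ with the number $b$ of saddles of the original ribbon concordance.
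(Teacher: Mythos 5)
Your proof is correct and follows exactly the paper's route: one application of Theorem~\ref{thm:cobordism-bound} to the ribbon concordance itself, one to its reverse, followed by the two-case analysis. The only addition is your explicit Euler-characteristic check that the reversed cobordism has $b$ local maxima, a bookkeeping point the paper leaves implicit, and it is carried out correctly.
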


In particular, given knots $K_0$ and $K_1$ such that $\Ord_v(K_0) \neq \Ord_v(K_1)$,
any ribbon concordance from $K_0$ to $K_1$ must have at least $\Ord_v(K_1)$ saddles.

We can also apply Theorem \ref{thm:cobordism-bound} in the case when there is
a \emph{ribbon cobordism} $S$ of arbitrary genus from $K_0$ to $K_1$. By definition,
$S$ has no local maxima, so
\[
\Ord_v(K_0) \le \Ord_v(K_1) + 2g(S).
\]
So we obtain the following corollary:

\begin{cor}
Suppose there is a ribbon cobordism from $K_0$ to $K_1$ of genus $g$. Then
\[
\Ord_v(K_0) - \Ord_v(K_1) \le 2g.
\]
\end{cor}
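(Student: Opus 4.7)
The plan is to invoke Theorem~\ref{thm:cobordism-bound} directly, since the hypothesis of the corollary forces $M = 0$. Indeed, a ribbon cobordism is by definition a cobordism whose radial function has no local maxima, so if $S$ is a ribbon cobordism from $K_0$ to $K_1$ of genus $g$, then Theorem~\ref{thm:cobordism-bound} applied to $S$ (with $M = 0$) yields
\[
\Ord_v(K_0) \le \max\{0,\, \Ord_v(K_1)\} + 2g(S).
\]
Since $\Ord_v(K_1) \in \N$ is non-negative, the maximum equals $\Ord_v(K_1)$, and rearranging gives the claimed inequality $\Ord_v(K_0) - \Ord_v(K_1) \le 2g$.

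There is no real obstacle here: the corollary is a one-line specialization of Theorem~\ref{thm:cobordism-bound}, and in fact the displayed inequality immediately preceding the corollary statement already records exactly this deduction. The only substantive content is contained in Theorem~\ref{thm:cobordism-bound} itself, which is assumed. Consequently, the proof is essentially: \emph{set $M = 0$ in Theorem~\ref{thm:cobordism-bound} and simplify the max.}
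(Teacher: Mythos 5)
Your proof is correct and is exactly the paper's own deduction: the text preceding the corollary applies Theorem~\ref{thm:cobordism-bound} with $M=0$ (since a ribbon cobordism has no local maxima) and uses $\max\{0,\Ord_v(K_1)\}=\Ord_v(K_1)$ to conclude. Nothing further is needed.
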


\subsection{Local minima of slice disks}

Suppose $K$ is a slice knot with slice disk $D$,
and let $m$ be the number of local minima of the radial function on $B^4$ restricted to $D$.
Viewing $D$ as a cobordism from $K$ to the empty knot, it has $m$ local maxima.
By removing a ball about one of the local maxima, we obtain a concordance from $K$ to
the unknot $U$ with $m-1$ local maxima. Since $\Ord_v(U) = 0$, Theorem~\ref{thm:cobordism-bound}
implies the following:

\begin{cor}
Suppose that $D$ is a slice disk for $K$, and let $m$ denote the number of local minima of the radial function
on $B^4$ restricted to $D$. Then
\[
\Ord_v(K) \le m-1.
\]
\end{cor}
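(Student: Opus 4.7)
The plan is to apply Theorem~\ref{thm:cobordism-bound} to a concordance from $K$ to the unknot $U$ obtained by excising a small ball around one of the local extrema of $D$.

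First, I would fix conventions. Let $\rho \colon B^4 \to [0,1]$ denote the radial distance from the origin, and view $D$ as a cobordism from $K$ at $\rho = 1$ down to the empty link at $\rho = 0$; in this direction the Morse function on $D$ is $1 - \rho|_D$. Under this convention the $m$ local minima of $\rho|_D$ are precisely the $m$ local maxima of the cobordism.

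Next, I would pick such a local maximum $p$ and excise a small open $4$-ball $B \subset B^4$ centered at $p$, chosen small enough that $D \cap B$ is a standard coordinate disk and $U := D \cap \partial B$ is an unknot in $\partial B \cong S^3$. Then $B^4 \setminus B$ is diffeomorphic to $S^3 \times I$, and $S := D \setminus (D \cap B)$ sits inside it as a cobordism from $K$ to $U$. Since $D$ is a disk and we have removed an open subdisk, $S$ is an annulus; in particular $g(S) = 0$, and $S$ is a concordance. The excision destroys exactly the local maximum $p$, leaving $m - 1$ local maxima in $S$.

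Finally, I would apply Theorem~\ref{thm:cobordism-bound} to $S$ with $K_0 = K$ and $K_1 = U$. Since $\HFK^-(U) \cong \bF_2[v]$ is torsion-free, $\Ord_v(U) = 0$, and combined with $g(S) = 0$ the theorem yields
\[
\Ord_v(K) \le \max\{m - 1,\, 0\} + 0 = m - 1.
\]
No serious obstacle is expected: Theorem~\ref{thm:cobordism-bound} does all the real work, and the only point requiring care is the Morse-theoretic verification that a sufficiently small ball around the index-$0$ critical point $p$ meets $D$ in a flat disk and bounds an unknotted $U$, which is standard.
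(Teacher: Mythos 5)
Your proposal is correct and is essentially the paper's own argument: view $D$ as a cobordism from $K$ to the empty link so its $m$ local maxima are the local minima of $\rho|_D$, excise a small ball about one of them to get a genus-zero cobordism (an annulus) from $K$ to the unknot with $m-1$ local maxima, and apply Theorem~\ref{thm:cobordism-bound} with $\Ord_v(U)=0$. The only implicit point, that $\max\{m-1,0\}=m-1$, holds since any slice disk has $m\ge 1$.
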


\subsection{The refined cobordism distance}

If $K_0$ and $K_1$ are knots in $S^3$, we define the \emph{refined cobordism distance} $d(K_0,K_1)$ as
the minimum of the quantity $\max\{m, M\} + 2g(S)$ over all connected, oriented knot
cobordisms $S$ from $K_0$ to $K_1$, where $m$ is the number of local minima
and $M$ is the number of local maxima of the height function on $S$. The function $d$ is a metric on the set of
knots in $S^3$ modulo isotopy; see Proposition~\ref{prop:metric}. Furthermore, $d$ is a refinement of the
standard cobordism distance on knots (i.e., the slice genus of $K_0 \# \bar{K}_1)$.
See Section~\ref{sec:metrics} for more details.
As a corollary of Theorem~\ref{thm:cobordism-bound}, we obtain the following:

\begin{cor}\label{cor:distance}
If $K_0$ and $K_1$ are knots in $S^3$, then
\[
|\Ord_v(K_0) - \Ord_v(K_1)| \le d(K_0,K_1)\le d_B(K_0,K_1),
\]
where $d_B(K_0,K_1)$ is the minimum number of oriented band moves required to get from $K_0$ to $K_1$.
\end{cor}

\begin{proof}
We first show the rightmost inequality of Corollary \ref{cor:distance}. If $b$ denotes the number of saddle points on $S$, then $2g(S) = -\chi(S) = b - m - M$. Hence
\[
\max\{m, M\} + 2g(S) = \max\{b-m, b-M\} \le b,
\]
and the distance $d(K_0, K_1)$ is at most the number of saddles appearing in any connected,
oriented cobordism from $K_0$ to $K_1$

Now we prove the leftmost inequality by utilizing Theorem~\ref{thm:cobordism-bound}. In particular, we obtain that
\begin{equation}\label{eq:distance0}
\Ord_v(K_0)\le \max \{M, \Ord_v(K_1)\} + 2g(S) \le M + \Ord_v(K_1) + 2g(S).
\end{equation}
Consequently,
\[
\Ord_v(K_0) - \Ord_v(K_1) \le M + 2g(S).
\]
Reversing the roles of $K_0$ and $K_1$ yields the statement.
\end{proof}

\subsection{The band-unlinking number}

If $K$ is a knot, the \emph{unknotting number} $u(K)$ is the minimum number
of crossing changes one must perform until one obtains the unknot. The
\emph{band-unknotting number} $u_b(K)$ is the minimum number of (oriented)
bands one must attach until one obtains an unknot. Since any crossing change
can be obtained by attaching two bands,
\[
u_b(K)\le 2 u(K).
\]

%

The band unknotting number, as well as an infinite family of variations, was
described by Hoste, Nakanishi, and Taniyama \cite{HNTH2move}, though the
concept is classical; see e.g.~Lickorish~\cite{LickorishTwistedBand}. In
their terminology, attaching an oriented band is an $\mathit{SH}(2)$-move.
They also studied the unoriented band unknotting number, which is often
called the $\mathit{H}(2)$-unknotting number.

In our present work, we are interested in a variation, which we call the \emph{band-unlinking number}, $ul_b(K)$,
which is the minimum number of oriented band moves necessary to reduce $K$ to an unlink. Note that
\[
ul_b(K)\le u_b(K).
\]
The band-unlinking and unknotting numbers are related to other topological invariants as follows:
\begin{equation}
2g_4(K)\le 2g_r(K)\le ul_b(K)\le u_b(K)\le 2g_3(K).\label{eq:relation-between-unlinking-number}
\end{equation}

In Equation~\eqref{eq:relation-between-unlinking-number}, $g_4$ is the slice
genus, $g_r$ is the ribbon slice genus (the minimal genus of a knot cobordism
from $K$ to the unknot with only saddles and local maxima), and $g_3$ is the
Seifert genus. The inequality involving the Seifert genus is obtained by
attaching bands corresponding to a basis of arcs for a minimal genus Seifert
surface.

As a corollary of Theorem~\ref{thm:cobordism-bound}, we have the following:

\begin{cor}
\label{thm:band-unknotting-number}
If $K$ is a knot in $S^3$, then
\[
\Ord_v(K)\le ul_b(K).
\]
\end{cor}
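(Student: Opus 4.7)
The plan is to apply Theorem~\ref{thm:cobordism-bound} to an explicit cobordism from $K$ to the unknot built from a minimal band-unlinking sequence. Set $n := ul_b(K)$, so by definition there is a sequence of $n$ oriented band moves converting $K$ to an unlink $L$ with $k \ge 1$ components. Reading this sequence as a movie produces a connected, oriented cobordism $S_0 \subset S^3 \times [0, 1/2]$ from $K$ to $L$ having exactly $n$ saddles and no local extrema. Connectedness is automatic: one begins with the annulus $K \times [0, \epsilon]$, and each successive band is glued to the boundary of the already-constructed surface, which preserves connectedness regardless of whether the band merges two components or splits one.

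Next I would cap off $k-1$ of the $k$ components of $L$ with pairwise disjoint embedded disks sitting in $S^3 \times [1/2, 1]$, leaving a single unknotted component $U \subset S^3 \times \{1\}$. Concatenating with $S_0$ produces a connected, oriented knot cobordism $S \colon K \to U$ whose natural Morse function has $n$ saddles, exactly $M = k-1$ local maxima, and no local minima. Counting handles yields $\chi(S) = -n + (k-1)$, while the surface-theoretic formula gives $\chi(S) = 2 - 2g(S) - 2 = -2g(S)$, so $2g(S) = n - k + 1$.

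Applying Theorem~\ref{thm:cobordism-bound} to $S$, and using $\Ord_v(U) = 0$, yields
\[
\Ord_v(K) \le \max\{k-1,\, 0\} + 2g(S) = (k-1) + (n - k + 1) = n = ul_b(K),
\]
as desired. The argument is essentially bookkeeping once Theorem~\ref{thm:cobordism-bound} is granted; the only structural point worth noting is that the theorem requires a cobordism between two \emph{knots}, which is why one component of the terminal unlink must be spared from capping, and it is precisely this choice that produces the $M = k-1$ local maxima in the bound. I do not anticipate a serious obstacle.
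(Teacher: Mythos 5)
Your proof is correct and is essentially the same as the paper's: attach the $ul_b(K)$ bands to reach an unlink, cap all but one component to get a cobordism to the unknot with $M=k-1$ local maxima and no minima, compute $2g(S)=n-k+1$ from the Euler characteristic, and apply Theorem~\ref{thm:cobordism-bound} with $\Ord_v(U)=0$. The only difference is your explicit (and correct) verification of connectedness, which the paper leaves implicit.
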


\begin{proof}
Let $b = ul_b(K)$. Then, after suitably attaching $b$ oriented bands to $K$,
we obtain an unlink of say $M$ components. By capping $M-1$ components
of the unlink, we obtain a cobordism $S$ from $K$ to the unknot $U$ with $0$ local minima,
$b$ saddles, and $M-1$ local maxima. Then
\[
2g(S) = -\chi(S) = b - M + 1,
\]
and since $\Ord_v(U) = 0$, Theorem~\ref{thm:cobordism-bound} implies that
\[
\Ord_v(K) \le \max\{M-1, \Ord_v(U)\} + 2g(S) = b,
\]
completing the proof.
\end{proof}

\begin{rem}
Corollary~\ref{thm:band-unknotting-number} and the inequality $ul_b(K)\le u_b(K)\le 2u(K)$ yield $\Ord_v(K)\le 2u(K)$. However, it is already known by 
Alishahi--Eftekhary~\cite[Theorem 1.1]{AEUnknotting} that $\Ord_v(K)\le u(K)$.
\end{rem}

\subsection{Ribbon knots and the fusion number}

A knot $K$ in $S^3$ is \emph{smoothly slice} if it bounds a smoothly embedded
disk in $B^4$. A knot $K$ is \emph{ribbon} if it bounds a smooth disk which
has only index 0 and 1 critical points with respect to the radial
function on $B^4$. Equivalently, a knot $K$ is ribbon if it can be formed by
attaching $n-1$ bands to an $n$-component unlink. The \emph{fusion number}
$\Fus(K)$ of a ribbon knot $K$ is the minimal number of bands required in any
ribbon disk for $K$; see e.g.~Miyazaki~\cite{Miyazaki86}.
Concerning the fusion number, we have the following consequence of Corollary~\ref{thm:band-unknotting-number}:

\begin{cor}\label{cor:fusion-number-bound}
If $K$ is a ribbon knot in $S^3$, then
\[
\Ord_{v}(K)\le \Fus(K).
\]
\end{cor}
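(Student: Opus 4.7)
The plan is to derive this immediately from Corollary~\ref{thm:band-unknotting-number} by observing that the fusion number dominates the band-unlinking number on ribbon knots. Concretely, I would show
\[
ul_b(K) \le \Fus(K),
\]
after which the stated bound $\Ord_v(K)\le \Fus(K)$ follows from $\Ord_v(K)\le ul_b(K)$.

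To see the inequality, let $n-1 = \Fus(K)$, so that $K$ admits a ribbon disk realized by attaching $n-1$ bands to an $n$-component unlink $U_n$. Reading this construction in reverse as a movie gives a sequence of $n-1$ oriented band attachments starting at $K$ and ending at the unlink $U_n$. This exhibits $K$ as being reduced to an unlink by $n-1$ band moves, so by definition of the band-unlinking number,
\[
ul_b(K) \le n-1 = \Fus(K).
\]
Combining with Corollary~\ref{thm:band-unknotting-number} yields the result.

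I expect no real obstacle here; the only subtlety is being careful that the bands in the ribbon-disk presentation are genuine oriented bands compatible with the orientation of $K$ (so that the reverse-time movie is an allowable sequence of oriented band moves producing an unlink). This is exactly the content of the standard equivalence between ribbon disks and fusion presentations, and is why the corollary follows cleanly from the band-unlinking bound rather than requiring a new appeal to Theorem~\ref{thm:cobordism-bound}.
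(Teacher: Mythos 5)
Your proposal is correct and is essentially identical to the paper's proof: the paper also observes that cutting $K$ along the $b$ bands of a ribbon disk produces an unlink, so $ul_b(K)\le b = \Fus(K)$, and then invokes Corollary~\ref{thm:band-unknotting-number}. No further comment needed.
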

\begin{proof}
If $B_1,\dots, B_b$ are the bands of a ribbon disk, then $B_1,\dots, B_b$ split $K$ into an unlink. Consequently, $ul_b(K)\le b$, so the statement follows from Corollary~\ref{thm:band-unknotting-number}.
\end{proof}

\subsection{The bridge index}

If $K$ is a knot in $S^3$, the \emph{bridge index} of $K$, denoted $\br(K)$, is the minimum over all diagrams $D$ of $K$ of the number of local maxima of $D$ with respect to a height function on the plane. It is well known that there is a ribbon disk for $K\# \bar{K}$ which has $\br(K)-1$ bands; see Figure \ref{fig:disk}. Consequently
\begin{equation}
\Fus(K\# \bar{K})\le \br(K)-1.\label{eq:fusion-bridge}
\end{equation}

 Ozsv\'{a}th and Szab\'{o}'s connected sum formula \cite{OSKnots}*{Theorem~7.1} implies
\begin{equation}
\Ord_v(K_1\# K_2)=\max \{\Ord_{v}(K_1),\Ord_v(K_2)\}.\label{eq:torsion-connected-sum}
\end{equation}
Consequently, we obtain the following additional consequence of Corollary~\ref{thm:band-unknotting-number}:
\begin{cor}\label{cor:bridge-number} If $K$ is a knot in $S^3$, then
\[
\Ord_v(K)\le \br(K)-1.
\]
\end{cor}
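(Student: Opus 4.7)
The plan is to deduce Corollary \ref{cor:bridge-number} by combining the connected sum formula for torsion order with the fusion number bound (Corollary \ref{cor:fusion-number-bound}) and the standard ribbon disk construction for $K \# \bar{K}$ that is mentioned just above (equation \eqref{eq:fusion-bridge}). No new Floer-theoretic input is needed; this is a purely formal chaining of the inequalities already assembled in the excerpt.

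First I would pass from $K$ to the ribbon knot $K\#\bar K$. Since $K\#\bar K$ is ribbon, Corollary \ref{cor:fusion-number-bound} applies and gives
\[
\Ord_v(K\#\bar K)\le \Fus(K\#\bar K).
\]
Next, I would use the connected sum formula \eqref{eq:torsion-connected-sum} to identify
\[
\Ord_v(K\#\bar K)=\max\{\Ord_v(K),\Ord_v(\bar K)\}\ge \Ord_v(K).
\]
(We do not even need to know that $\Ord_v(\bar K)=\Ord_v(K)$; the obvious inequality suffices.)

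Finally, I would invoke inequality \eqref{eq:fusion-bridge}, namely $\Fus(K\#\bar K)\le \br(K)-1$, which comes from the explicit ribbon disk for $K\#\bar K$ with $\br(K)-1$ bands illustrated in Figure~\ref{fig:disk}. Chaining the three inequalities,
\[
\Ord_v(K)\le \Ord_v(K\#\bar K)\le \Fus(K\#\bar K)\le \br(K)-1,
\]
which is exactly the desired bound. There is no real obstacle here; the only thing one has to keep track of is that the connected sum formula goes in the right direction to give a lower bound on $\Ord_v(K\#\bar K)$ in terms of $\Ord_v(K)$, which it does since it is a \emph{maximum}.
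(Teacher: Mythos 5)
Your proof is correct and is essentially the argument the paper intends: the corollary is stated as a consequence of the ribbon disk for $K\#\bar{K}$ with $\br(K)-1$ bands (giving \eqref{eq:fusion-bridge}), the connected sum formula \eqref{eq:torsion-connected-sum}, and the bound $\Ord_v(K\#\bar K)\le \Fus(K\#\bar K)$ coming from Corollary~\ref{cor:fusion-number-bound} (equivalently, Corollary~\ref{thm:band-unknotting-number}). Your chaining $\Ord_v(K)\le \Ord_v(K\#\bar K)\le \Fus(K\#\bar K)\le \br(K)-1$ matches the paper's route exactly.
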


\begin{figure}[ht!]
	\centering
	\scalebox{.8}{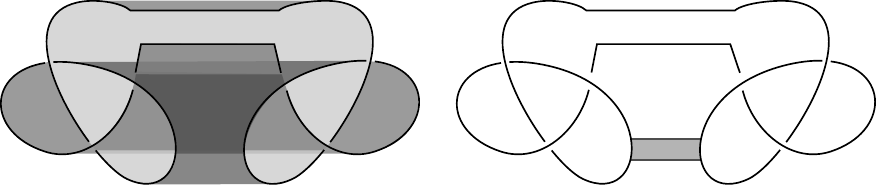}
	\caption{Left: the standard ribbon disk for $K \# \bar{K}$ (in this illustration, $K$ is a trefoil knot), immersed in $S^3$.
    Right: The corresponding $\br(K)-1$ bands attached to $K \# \bar{K}$ to obtain a $\br(K)$-component unlink.}\label{fig:disk}
\end{figure}

\subsection{Sharpness and torus knots}

As examples, we consider the positive torus knots $T_{p,q}$. It is a classical result of Schubert \cite{SchubertBridgeNumber} that
\begin{equation}
 \br(T_{p,q})=\min\{ p,q\}.\label{eq:bridge-number-torus-knots}
 \end{equation}
 Combining Equations~\eqref{eq:fusion-bridge} and~\eqref{eq:bridge-number-torus-knots}, we obtain
\begin{equation}\label{eqn:fusion}
\Fus(T_{p,q}\# \bar{T}_{p,q})\le \min\{p,q\}-1.
\end{equation}

 In Corollary~\ref{cor:torsion-order-Tpq}, we show
 \begin{equation}
\Ord_v(T_{p,q})=\min\{p,q\}-1. \label{eq:Tpq-torsion-order}
 \end{equation}
Equations~\eqref{eq:bridge-number-torus-knots} and~\eqref{eq:Tpq-torsion-order} imply
Corollaries~\ref{cor:fusion-number-bound} and~\ref{cor:bridge-number} are sharp:
\[
\Ord_{v} (T_{p,q})=\br(T_{p,q})-1 \qquad \text{and} \qquad \Ord_v (T_{p,q}\# \bar{T}_{p,q})=\Fus(T_{p,q}\# \bar{T}_{p,q}).
\]

Dai, Hom, Stoffregen, and Truong~\cite{DaiHomomorphisms} constructed a concordance invariant $N(K)$.
By \cite{DaiHomomorphisms}*{Proposition~1.15}, this satisfies
\begin{equation}
N(K)\le \Ord_v(K).\label{eq:inequality-N-torsion}
\end{equation}
In \cite{DaiHomomorphisms}*{Proposition~1.5}, they computed the invariant $N$ for L-space knots
using Ozsv\'{a}th and Szab\'{o}'s description of the knot Floer complexes of L-space knots \cite{OSLSpaceSurgeries}.
Combined with Lemma~\ref{lem:torsion-order-L-space-knot}, below, for an L-space knot $J$, we have
\begin{equation}
N(J) = \Ord_v(J). \label{L-space-torsion-order}
\end{equation}
Using equations~\eqref{eq:inequality-N-torsion} and~\eqref{L-space-torsion-order},
if $K$ is concordant to an L-space knot $J$, then
\[
\Ord_v(K) \ge \Ord_v(J).
\]

As a consequence of our bound on the bridge index in Corollary~\ref{cor:bridge-number},
together with the fact that $N(K)$ is a concordance invariant, we obtain the following:

\begin{cor}
If $K$ is concordant to a torus knot $T_{p,q}$, then
\[
\br(K)\ge \br(T_{p,q}).
\]
\end{cor}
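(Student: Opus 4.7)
The plan is to chain together the inequalities already established in the excerpt. The key observation is that the positive torus knot $T_{p,q}$ is an L-space knot, so the Dai--Hom--Stoffregen--Truong invariant $N$ agrees with the torsion order on $T_{p,q}$ by equation~\eqref{L-space-torsion-order}. Combined with the computation $\Ord_v(T_{p,q}) = \min\{p,q\} - 1$ from equation~\eqref{eq:Tpq-torsion-order} and Schubert's bridge index formula $\br(T_{p,q}) = \min\{p,q\}$ in equation~\eqref{eq:bridge-number-torus-knots}, this gives
\[
N(T_{p,q}) = \Ord_v(T_{p,q}) = \br(T_{p,q}) - 1.
\]

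Next I would use the concordance invariance of $N$. If $K$ is concordant to $T_{p,q}$, then $N(K) = N(T_{p,q}) = \br(T_{p,q}) - 1$. By the inequality~\eqref{eq:inequality-N-torsion}, namely $N(K) \le \Ord_v(K)$, this forces
\[
\Ord_v(K) \ge \br(T_{p,q}) - 1.
\]

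Finally, applying the bridge index bound from Corollary~\ref{cor:bridge-number} to $K$ gives $\Ord_v(K) \le \br(K) - 1$. Combining with the previous inequality yields $\br(K) - 1 \ge \br(T_{p,q}) - 1$, which rearranges to the desired $\br(K) \ge \br(T_{p,q})$. There is no real obstacle here: all of the heavy lifting has been done in Theorem~\ref{thm:cobordism-bound} (via Corollary~\ref{cor:bridge-number}), the torsion order computation for L-space knots, and the concordance invariance of $N$. The only thing to verify is that $T_{p,q}$ is indeed an L-space knot so that equation~\eqref{L-space-torsion-order} applies, which is classical.
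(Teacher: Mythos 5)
Your argument is correct and is essentially identical to the paper's proof: both chain $\br(K) \ge \Ord_v(K)+1 \ge N(K)+1 = N(T_{p,q})+1 = \br(T_{p,q})$, using Corollary~\ref{cor:bridge-number}, Equation~\eqref{eq:inequality-N-torsion}, the concordance invariance of $N$, and Equations~\eqref{eq:bridge-number-torus-knots}, \eqref{eq:Tpq-torsion-order}, and~\eqref{L-space-torsion-order}. No gaps; nothing further to add.
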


\begin{proof}
We have
\[
\br(K)\ge \Ord_v(K)+1\ge N(K)+1=N(T_{p,q})+1=\br(T_{p,q}).
\]
The first inequality follows from Corollary~\ref{cor:bridge-number}, while
the second from Equation~\eqref{eq:inequality-N-torsion}. The first equality
holds since $N$ is a concordance invariant. The final equality follows from
Equations~\eqref{eq:bridge-number-torus-knots}, \eqref{eq:Tpq-torsion-order}, and~\eqref{L-space-torsion-order}.
\end{proof}

\subsection{Sarkar's ribbon distance}\label{sec:ribbon-distance-intro}

We first introduce the torsion distance of two knots.

\begin{define}
  Let $K$ and $K'$ be knots in $S^3$. Then we define their \emph{torsion distance} $d_t(K, K')$ as
  \[
  \min \{\, d \in \N : v^d \HFK^-(K)\iso v^d\HFK^-(K') \,\}.
  \]
\end{define}

Sarkar~\cite{SarkarRibbon} introduced the \emph{ribbon distance} $d_r(K,K')$ between knots $K$ and $K'$;
see Section~\ref{sec:ribbon-distance} for a precise definition. This is finite if and only if $K$ and $K'$ are concordant.
He proved that Lee's perturbation of Khovanov homology \cite{LeePertKh} gives a lower bound on the ribbon distance.
We prove the following knot Floer homology analogue of Sarkar's result:

\begin{thm}\label{thm:ribbon-distance}
Suppose $K$ and $K'$ are knots in $S^3$. Then
\[
d_t(K, K') \le d_r(K, K').
\]
\end{thm}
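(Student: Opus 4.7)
My plan is to mirror Sarkar's Khovanov-homology argument in the setting of knot Floer homology, using the link Floer TQFT of Section~\ref{sec:background} in place of Lee's perturbation. The proof will combine a one-step lemma for elementary ribbon moves with a triangle inequality for $d_t$ and an induction on the ribbon distance.

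The triangle inequality $d_t(K,K')\le d_t(K,J)+d_t(J,K')$ is formal: given isomorphisms $\phi\colon v^a\HFK^-(K)\cong v^a\HFK^-(J)$ and $\psi\colon v^b\HFK^-(J)\cong v^b\HFK^-(K')$, the restrictions $v^b\phi$ and $v^a\psi$ are isomorphisms $v^{a+b}\HFK^-(K)\cong v^{a+b}\HFK^-(J)\cong v^{a+b}\HFK^-(K')$, and composing yields the desired bound. For the main argument, by Sarkar's definition (to be recalled in Section~\ref{sec:ribbon-distance}), $d_r(K,K')=d$ yields a chain of intermediate knots $K=J_0,J_1,\ldots,J_d=K'$ in which consecutive pairs differ by a single elementary ribbon move. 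Combined with a one-step lemma $d_t(J_{i-1},J_i)\le 1$, iterating the triangle inequality will give $d_t(K,K')\le d$, as required.

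The heart of the proof is the one-step lemma. An elementary ribbon move from $J$ to $J'$ corresponds to an elementary cobordism $C$, which induces a cobordism map $F_C$ in the link Floer TQFT of Section~\ref{sec:background}. The expectation is that $F_C$ is a split injection (analogous to Zemke's ribbon concordance theorem \cite{ZemRibbon}*{Theorem~1.7}), yielding a decomposition $\HFK^-(J')\cong F_C(\HFK^-(J))\oplus Q$ with $Q$ a $v$-torsion submodule; the lemma then reduces to showing $vQ=0$, since in that case $v\HFK^-(J')\cong v F_C(\HFK^-(J))\cong v\HFK^-(J)$.

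The main obstacle is establishing the sharp $v$-annihilation $vQ=0$: Zemke's injectivity shows $Q$ is $v$-torsion, but does not immediately bound $\Ord_v Q$ by $1$. My plan for this is a chain-level analysis of the link Floer cobordism maps for the elementary piece, decomposing $C$ into its constituent birth and band components and tracking the $v$-action on each to verify that the cumulative cokernel is killed by one application of $v$. Alternatively, if Sarkar's precise notion of ``elementary ribbon move'' carries additional structural information (such as a local model for the band in question), the $v$-annihilation may be extractable more directly from the module-theoretic splitting, in analogy with Sarkar's Lee-spectral-sequence argument in \cite{SarkarRibbon}.
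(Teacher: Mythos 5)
There is a genuine gap, and it starts with the definition of $d_r$. Sarkar's ribbon distance (as recalled in Section~\ref{sec:ribbon-distance}) is \emph{not} the number of elementary ribbon moves needed to pass from $K$ to $K'$: it is the minimum over chains $K=K_0,K_1,\dots,K_n=K'$, with consecutive knots joined by ribbon concordances (in either direction), of the \emph{maximum} number of saddles occurring in any single concordance of the chain; the length $n$ of the chain is unconstrained. In particular $d_r$ satisfies the ultrametric-type inequality $d_r(K,K'')\le\max\{d_r(K,K'),d_r(K',K'')\}$, not an additive one. Your scheme --- an additive triangle inequality for $d_t$ together with a one-step lemma $d_t(J_{i-1},J_i)\le 1$ --- therefore proves at best $d_t(K,K')\le(\text{total number of bands in the chain})$, which is not bounded by $d_r(K,K')$: already when $d_r(K,K')=1$ the chain may be arbitrarily long, so your argument only gives $d_t\le n$ rather than $d_t\le 1$. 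There is also a secondary problem with reducing to single-band steps: a ribbon concordance with $n$ saddles (births followed by fusions) cannot in general be factored into $n$ one-saddle ribbon concordances \emph{between knots}, since the intermediate cross-sections are links. What is actually needed is (a) the per-concordance statement that a single ribbon concordance with $n$ saddles gives $v^n\HFK^-(K)\iso v^n\HFK^-(K')$, and (b) the observation that such isomorphisms restrict to the image of any higher power of $v$ and hence compose at the level $k=\max_i n_i$, so that $d_t$ obeys the same max-type inequality as $d_r$. Your triangle-inequality computation already contains the germ of (b) --- restricting $\phi$ and $\psi$ --- but you invoke it additively, which is the wrong bookkeeping for this metric.

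Finally, the one-step (or, as needed, $n$-step) lemma itself is left unproven: you identify the sharp annihilation $vQ=0$ of the cokernel as the ``main obstacle'' and only sketch two possible strategies. The paper bypasses any cokernel analysis entirely via Proposition~\ref{prop:main-thm-maps}: applying it to a ribbon concordance $C$ with $n$ saddles (no local maxima, hence $m=b=n$, $g=0$) and to its vertical mirror yields $G_0\circ F_0=\id_{\HFK^-(K)}$ and $v^n\cdot F_0\circ G_0=v^n\cdot\id_{\HFK^-(K')}$ for the induced cobordism maps $F_0$, $G_0$; restricting to the images of $v^n$ immediately produces inverse isomorphisms $v^n\HFK^-(K)\iso v^n\HFK^-(K')$. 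If you want to salvage your outline, replace the elementary-move decomposition and additivity by this per-concordance statement and the max-composition property.
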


Note that $d_t(K, U) = \Ord_v(K)$, where $U$ denotes the unknot.
Hence $\Ord_v(K) \le d_r(K,U)$, and equations~\eqref{eq:torsion-connected-sum} and \eqref{eq:Tpq-torsion-order} imply that
\[
\min\{p,q\}-1 = \Ord_v(T_{p,q}\# \bar{T}_{p,q}) \le d_r(T_{p,q}\# \bar{T}_{p,q},U).
\]
On the other hand, when $K$ is ribbon, $d_r(K, U) \le \Fus(K)$.
By equation~\eqref{eqn:fusion}, we obtain that
\[
d_r(T_{p,q}\# \bar{T}_{p,q},U) = \min\{p,q\}-1.
\]
As a consequence, $d_r(K,U)$ can be arbitrarily large for ribbon knots $K$,
a fact that Sarkar was unable to establish using Khovanov homology;
see~\cite[Example~3.1]{SarkarRibbon}.

\begin{rem}
It is easy to extend this computation to show that there are prime slice knots with determinant 1
that have arbitrarily large ribbon distance from the unknot.
Kim~\cite{Kim} showed that every knot $K$ admits an invertible
concordance $C$ to a \emph{prime} knot $K'$ with the same Alexander polynomial,
obtained by taking a certain satellite of $K$. According to \cite{JMConcordance}*{Theorem~1.6}, the concordance map for $C$ (for an appropriate choice of decoration) is injective,  and hence $\Ord_v(K) \le \Ord_v(K')$.
If $K = T_{p,q} \# \bar{T}_{p,q}$ with $p$ and $q$ odd, then $\det(K) = 1$, and hence $\det(K') = 1$ as well.
\end{rem}

\subsection{Data from the knot table}\label{sec:data}

One advantage of using $\Ord_v(K)$ to bound $u_b(K)$ is computability. In
particular, a program of Ozsv\'{a}th and Szab\'{o} \cite{SzaboProgram} can
quickly compute $\HFK^-(K)$ and $\Ord_v(K)$. Using this program and data from
KnotInfo \cite{knotinfo}, we determined $\Ord_v(K)$ for all prime $K$ with
crossing number at most twelve. The results are contained in Table
\ref{table:data}. These small knots have small bridge number, so it is an
unsurprising result that all such knots have $\Ord_v(K)\in\{1,2\}$. (We
remind the reader that the unknot $U$ is not prime, and $\Ord_v(U)=0$.)

\begin{table}
\begin{centering}
\begin{tabular}{ccccccc}
\hline\hline\multicolumn{7}{c}{Knots with bridge index $3$}\\
\hline
$\mathtt{8_{19}}$&
$\mathtt{10_{124}}$&
$\mathtt{10_{128}}$&
$\mathtt{10_{139}}$&
$\mathtt{10_{152}}$&
$\mathtt{10_{154}}$&
$\mathtt{10_{161}}$\\
$\mathtt{11n_{9}}$&
$\mathtt{11n_{27}}$&
$\mathtt{11n_{57}}$&
$\mathtt{11n_{61}}$&
$\mathtt{11n_{88}}$&
$\mathtt{11n_{104}}$&
$\mathtt{11n_{126}}$\\
$\mathtt{11n_{133}}$&
$\mathtt{11n_{183}}$&
$\mathtt{12n_{0068}}$&
$\mathtt{12n_{0087}}$&
$\mathtt{12n_{0089}}$&
$\mathtt{12n_{0091}}$&
$\mathtt{12n_{0093}}$\\
$\mathtt{12n_{0105}}$&
$\mathtt{12n_{0110}}$&
$\mathtt{12n_{0129}}$&
$\mathtt{12n_{0136}}$&
$\mathtt{12n_{0138}}$&
$\mathtt{12n_{0141}}$&
$\mathtt{12n_{0149}}$\\
$\mathtt{12n_{0153}}$&
$\mathtt{12n_{0156}}$&
$\mathtt{12n_{0172}}$&
$\mathtt{12n_{0175}}$&
$\mathtt{12n_{0187}}$&
$\mathtt{12n_{0192}}$&
$\mathtt{12n_{0203}}$\\
$\mathtt{12n_{0207}}$&
$\mathtt{12n_{0217}}$&
$\mathtt{12n_{0218}}$&
$\mathtt{12n_{0228}}$&
$\mathtt{12n_{0231}}$&
$\mathtt{12n_{0242}}$&
$\mathtt{12n_{0243}}$\\
$\mathtt{12n_{0244}}$&
$\mathtt{12n_{0251}}$&
$\mathtt{12n_{0260}}$&
$\mathtt{12n_{0264}}$&
$\mathtt{12n_{0292}}$&
$\mathtt{12n_{0328}}$&
$\mathtt{12n_{0329}}$\\
$\mathtt{12n_{0366}}$&
$\mathtt{12n_{0368}}$&
$\mathtt{12n_{0374}}$&
$\mathtt{12n_{0386}}$&
$\mathtt{12n_{0387}}$&
$\mathtt{12n_{0404}}$&
$\mathtt{12n_{0417}}$\\
$\mathtt{12n_{0418}}$&
$\mathtt{12n_{0419}}$&
$\mathtt{12n_{0425}}$&
$\mathtt{12n_{0426}}$&
$\mathtt{12n_{0436}}$&
$\mathtt{12n_{0472}}$&
$\mathtt{12n_{0473}}$\\
$\mathtt{12n_{0502}}$&
$\mathtt{12n_{0503}}$&
$\mathtt{12n_{0518}}$&
$\mathtt{12n_{0528}}$&
$\mathtt{12n_{0574}}$&
$\mathtt{12n_{0575}}$&
$\mathtt{12n_{0579}}$\\
$\mathtt{12n_{0591}}$&
$\mathtt{12n_{0594}}$&
$\mathtt{12n_{0603}}$&
$\mathtt{12n_{0639}}$&
$\mathtt{12n_{0640}}$&
$\mathtt{12n_{0647}}$&
$\mathtt{12n_{0648}}$\\
$\mathtt{12n_{0655}}$&
$\mathtt{12n_{0660}}$&
$\mathtt{12n_{0665}}$&
$\mathtt{12n_{0679}}$&
$\mathtt{12n_{0680}}$&
$\mathtt{12n_{0688}}$&
$\mathtt{12n_{0689}}$\\
$\mathtt{12n_{0690}}$&
$\mathtt{12n_{0691}}$&
$\mathtt{12n_{0692}}$&
$\mathtt{12n_{0693}}$&
$\mathtt{12n_{0694}}$&
$\mathtt{12n_{0696}}$&
$\mathtt{12n_{0725}}$\\
$\mathtt{12n_{0749}}$&
$\mathtt{12n_{0750}}$&
$\mathtt{12n_{0810}}$&
$\mathtt{12n_{0830}}$&
$\mathtt{12n_{0850}}$&
$\mathtt{12n_{0851}}$&
$\mathtt{12n_{0888}}$\\ \hline \hline
\multicolumn{7}{c}{Knots with bridge index $4$}\\
\hline
$\mathtt{11n_{77}}$&
$\mathtt{11n_{81}}$&
$\mathtt{12n_{0059}}$&
$\mathtt{12n_{0067}}$&
$\mathtt{12n_{0220}}$&
$\mathtt{12n_{0229}}$&
$\mathtt{12n_{0642}}$\\
\hline
\end{tabular}
\end{centering}
\caption{These prime knots each have torsion order two in $\HFK^-$.
All other prime knots through twelve crossings have torsion order one.
Here we do not distinguish between $K$ and $\bar{K}$, as $\Ord_v(K)=\Ord_v(\bar{K})$.
Note that most of these examples have bridge index three.
When $K$ in this table has $\br(K)=3$, the bound $\Ord_v(K)\le\br(K)-1$ of Corollary \ref{cor:bridge-number} is sharp.}
\label{table:data}\end{table}

\subsection{Generalized torsion orders}
There is a larger version of the knot Floer complex, denoted $\cCFK^-(K)$,
which is a chain complex over the two-variable polynomial ring $\bF_2[u,v]$.
Since $\bF_2[u,v]$ is not a PID, the correct notion of torsion order is
somewhat subtle. For example, for many knots, $\cHFK^-(K)$ is
torsion-free over $\bF_2[u,v]$, but not free as an $\bF_2[u,v]$-module. See
Lemma~\ref{lem:generalized-torsion-order-computation} for some example
computations.

In Section~\ref{sec:generalized-torsion}, we describe several notions of
torsion order using $\cCFK^-(K)$. The largest of these we call the
\emph{chain torsion order}, denoted $\cOrd^{\Chain}_{u,v}(K)$, which is a slight
generalization of the invariant $u'(K)$ described by Alishahi and Eftekhary
\cite{AEUnknotting}. We define $\cOrd^{\Chain}_{u,v}(K)$ to be the minimal
integer $N\in \N$ such that for all $i,j\ge 0$ such that $i+j=N$, there are
chain maps
\[
f\colon \cCFK^-(K)\to \bF_2[u,v]\quad \text{and} \quad g\colon \bF_2[u,v]\to \cCFK^-(K)
\]
such that $g\circ f$ and $f\circ g$ are chain homotopic to multiplication by $u^iv^j$.

We prove that the chain torsion order satisfies a bound similar to
Theorem~\ref{thm:cobordism-bound}; see
Proposition~\ref{prop:bound-generalized-torsion}. As a consequence, we obtain
that the chain torsion order bounds the band-unlinking number $ul_b(K)$, as
well as the fusion number $\Fus(K)$ of a ribbon knot.

It is interesting to note that since $\bF_2[u,v]$ is not a PID, the behavior
of torsion under connected sums is somewhat complicated. Hence the proof of
Corollary~\ref{cor:bridge-number} does not extend to show that
$\cOrd_{u,v}^{\Chain}(K)$ is a lower bound on $\br(K)-1$. In fact,
\[
\cOrd_{u,v}^{\Chain}(T_{p,q})=(p-1)(q-1)/2,
\]
when $p$ and $q$ are positive and coprime, so such a bound cannot hold.

Nonetheless, our bound on the fusion number of a ribbon knot implies
$\cOrd_{u,v}^{\Chain}(K\# \bar{K})\le \br(K)-1$, which can be contrasted with
the fact that
\[
\cOrd_{u,v}^{\Chain}(T_{p,q}\# \bar{T}_{p,q})=\min \{p,q\}-1=\br(T_{p,q})-1,
\]
when $p$ and $q$ are positive and coprime.

\subsection{Previous bounds}\label{sec:prev-bounds}

Bounding the fusion number is challenging, though there are some bounds already in the literature.
A classical lower bound is provided by $\rk(H_1(\Sigma(K)))/2$,
where $\Sigma(K)$ is the branched double cover of $S^3$ along $K$, and $\rk$ denotes
the smallest cardinality of a generating set; see Nakanishi and Nakagawa~\cite[Proposition~2]{NakanishiNakagawa}
and Sarkar~\cite[Section~3]{SarkarRibbon}.
Following \cite[Example~3.1]{SarkarRibbon}, if $K$ is a ribbon knot with $\det(K) \neq 1$
(e.g., $K = T_{2,3} \# \bar{T}_{2,3}$), and $K_n$ is the
connected sum of $n$ copies of $K$, then $\Fus(K_n) \ge n$.
This classical method fails when $\det(K) = 1$; e.g., for $K = T_{p,q} \# \bar{T}_{p,q}$ with $p$ and $q$ odd.
Our methods allow us to show that $\Fus(K)$ can be arbitrarily large even when $\det(K) = 1$;
e.g., for $K = T_{p,q} \# \bar{T}_{p,q}$ when $p$ and $q$ are odd.

Kanenobu \cite{KanenobuBandSurgery}*{Theorem~4.3} proved a bound which
involves the dimensions of $H_1(\Sigma(K);\Z_3)$ and $H_1(\Sigma(K); \Z_5)$.
Mizuma~\cite{MizumaRibbonJones}*{Theorem~1.5} showed that  if $K$ is a ribbon knot
which has Alexander polynomial 1 and whose Jones polynomial has non-vanishing
derivative at $t=-1$, then $K$ has fusion number at least 3. More recently,
Aceto, Golla, and Lecuona \cite{AGLRational}*{Corollary~2.3}  have given obstructions using the Casson--Gordon
signature invariants of $\Sigma(K)$. Note
that these bounds do not give useful information for the ribbon knots
$K=T_{p,q}\# \bar{T}_{p,q}$ for odd $p$ and $q$ since they involve
$H_1(\Sigma(K))$, and $\Sigma(T_{p,q}\# \bar{T}_{p,q})$ is the connected sum
of the Brieskorn spheres $\Sigma(2,p,q)$ and $-\Sigma(2,p,q)$.

Alishahi \cite{AKhUnknotting} and Alishahi--Eftekhary \cite{AEUnknotting}
have obtained bounds for the unknotting number using the torsion order of
knot Floer homology and Lee's perturbation of Khovanov homology, which are
similar in flavor to our present work.

The work of Sarkar \cite{SarkarRibbon} is the most similar to ours. Sarkar
used the torsion order of the $X$-action on Lee's perturbation of Khovanov
homology to give a lower bound on the fusion number and the ribbon distance.
We note that the torsion order of Khovanov homology is usually very small.
Khovanov thin knots have torsion order at most 1. Prior to the work of
Manolescu and Marengon \cite{MMKnightMove}, the largest known torsion order
was 2. Their work exhibits a knot with torsion order at least 3. In
contrast, the $(p,q)$-torus knot has knot Floer homology with torsion order
$\min\{p,q\}-1$; see Section~\ref{sec:HFK-torus-knots}.

\subsection*{Acknowledgements}

We would like to thank Paolo Aceto and Marc Lackenby for helpful discussions.
We are also grateful to the anonymous referee for helpful comments.

\section{A refinement of the cobordism distance}
\label{sec:metrics}

Suppose that $K_0$ and $K_1$ are knots in $S^3$. The standard cobordism
distance between $K_0$ and $K_1$ is defined as the minimal genus of an
oriented knot cobordism connecting $K_0$ and $K_1$; see Baader~\cite{BaaderScissor}.
We will write $d_{\cob}(K_0,K_1)$ for the standard cobordism distance. Equivalently,
$d_{\cob}$ can be defined in terms of the slice genus of $K_0\# \bar{K}_1$.
The distance $d_{\cob}(K_0,K_1)=0$ if and only if $K_0$ and $K_1$ are
concordant, and hence descends to a metric on the knot concordance group.
In this section, we describe a refinement of the standard
cobordism distance, which is an actual metric
on the set of knots in $S^3$ modulo isotopy. Note that we will always perturb surfaces in $[0,1]\times S^3$ so that projection to the first factor is Morse.

\begin{define}
If $S$ is a connected, oriented knot cobordism in $[0,1] \times S^3$ from
$K_0$ to $K_1$ with $m$ local minima and $M$ local maxima, then we define
the quantity $|S| \in \Z_{\ge 0}$ by the formula
\[
|S|:=\max\{m, M\} + 2g(S).
\]
We define the \emph{refined cobordism distance} from $K_0$ to $K_1$ as
\[
d(K_0,K_1) := \min\left\{|S|: S
\text{ is a connected, oriented cobordism from $K_0$ to $K_1$}\right\}.
\]
\end{define}

Note that
\[
2d_{\cob}(K_0,K_1)\le d(K_0,K_1).
\]
We now show that our refined cobordism distance is indeed a metric:

\begin{prop}\label{prop:metric}
The refined cobordism distance $d(K_0,K_1)$ defines a metric on the set of knots in $S^3$ modulo isotopy.
\end{prop}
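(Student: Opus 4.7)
The plan is to verify the three metric axioms directly from the definition of $|S|$ and the properties of knot cobordisms. Non-negativity is immediate since $m$, $M$, and $g$ are all non-negative. Symmetry follows by reversing cobordisms: if $S$ is a connected cobordism from $K_0$ to $K_1$ with $m$ local minima, $M$ local maxima, and genus $g(S)$, then flipping the $[0,1]$-direction produces a connected cobordism from $K_1$ to $K_0$ with $M$ local minima, $m$ local maxima, and the same genus, hence with the same value of $|S|$.

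For definiteness, I first observe that if $K_0$ and $K_1$ are isotopic, then the trace of an ambient isotopy provides a product cobordism with $m = M = 0$ and $g = 0$, so $d(K_0,K_1) = 0$. For the converse, suppose $|S| = 0$ for some connected, oriented cobordism $S$ from $K_0$ to $K_1$. Then $g(S) = 0$ and $m = M = 0$. The cobordism $S$ has two boundary components (namely $K_0$ and $K_1$), so $\chi(S) = 2 - 2g(S) - 2 = 0$. On the other hand, writing $b$ for the number of saddles of the height function on $S$, we have $\chi(S) = m - b + M = -b$. Thus $b = 0$, so $S$ has no critical points at all, and $S$ realises an isotopy from $K_0$ to $K_1$ in $S^3$.

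The triangle inequality is where the definition of $|S|$ was designed. Given connected cobordisms $S_1$ from $K_0$ to $K_1$ and $S_2$ from $K_1$ to $K_2$ with minima/maxima/genera $(m_i, M_i, g_i)$, I stack them along $K_1$ (rescaling and smoothing to obtain a genuine smooth cobordism in $[0,1] \times S^3$) to produce a connected cobordism $S_1 \cup_{K_1} S_2$ from $K_0$ to $K_2$; connectedness is preserved because the two pieces are glued along the connected knot $K_1$. The local minima and maxima in the composite are $(m_1 + m_2, M_1 + M_2)$, and the genus is $g_1 + g_2$. The elementary inequality
\[
\max\{m_1 + m_2,\, M_1 + M_2\} \le \max\{m_1, M_1\} + \max\{m_2, M_2\}
\]
then yields $|S_1 \cup_{K_1} S_2| \le |S_1| + |S_2|$. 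Taking infima over $S_1$ and $S_2$ gives $d(K_0, K_2) \le d(K_0, K_1) + d(K_1, K_2)$.

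None of the three axioms is genuinely hard; the only delicate point is that the definition of $|S|$ uses $\max\{m,M\}$ rather than $m+M$, which is exactly what makes the composition estimate in the triangle inequality go through. I would therefore highlight that subadditivity of $\max$ under coordinatewise addition is the precise feature of the definition that makes $d$ a metric.
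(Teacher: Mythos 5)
Your proof is correct and follows essentially the same route as the paper: non-negativity and symmetry are immediate, definiteness comes from the Euler characteristic count $0=\chi(S)=m-b+M=-b$ forcing a product cobordism, and the triangle inequality rests on the subadditivity $\max\{A+A',B+B'\}\le\max\{A,B\}+\max\{A',B'\}$ applied to stacked cobordisms. You simply spell out the details (isotopy trace, gluing along $K_1$, additivity of genus) that the paper leaves implicit.
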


\begin{proof}
Symmetry is clear. By definition, $d(K_0, K_1) \ge 0$. Equality holds
if and only if there is a cobordism $S$ from $K_0$ to $K_1$ with $g(S) = 0$
and no local minima or maxima, and hence no saddles as $0 = \chi(S) = m - b + M = -b$;
i.e., when $K_0$ and $K_1$ are isotopic.
Finally, the triangle inequality follows from the arithmetic inequality
\[
\max \{A+A', B+B'\}\le \max\{A,B\}+\max\{A',B'\}.
\]
\end{proof}

There is another metric on the set of knots which commonly appears in the literature, the
\emph{Gordian metric} $d_G$, introduced by Murakami~\cite{MurakamiMetrics}.
The quantity $d_G(K_0,K_1)$ is the minimal number of crossing changes
required to change $K_0$ into $K_1$. Since a crossing change may be realized
with two oriented band surgeries, we have
\[
d(K_0,K_1)\le d_B(K_0,K_1)\le 2d_G(K_0,K_1).
\]

\section{Background on knot and link Floer homologies}\label{sec:background}

\subsection{The link Floer homology groups}

Knot Floer homology is an invariant of knots in 3-manifolds defined by
Ozsv\'{a}th and Szab\'{o} \cite{OSKnots}, and independently Rasmussen
\cite{RasmussenKnots}. The construction was extended to links by Ozsv\'{a}th
and Szab\'{o} \cite{OSLinks}.

A \emph{multi-based link} $\bL=(L,\ws,\zs)$ consists of an oriented link $L$,
equipped with two disjoint collections of basepoints, $\ws$ and $\zs$, satisfying the following:
\begin{enumerate}
\item $\ws$ and $\zs$ alternate as one traverses $L$.
\item Each component of $L$ has at least 2 basepoints.
\end{enumerate}

To a multi-based link $\bL$ in $S^3$, Ozsv\'{a}th and Szab\'{o} associate
several versions of the link Floer homology groups. The hat version is a
bigraded $\bF_2$-vector space $\widehat{\HFL}(\bL)$.  We will mostly
focus on the minus version, denoted $\HFL^-(\bL)$,
which is a module over the polynomial ring $\bF_2[v]$.

The link Floer groups are constructed by picking a Heegaard diagram
$(\Sigma,\as,\bs,\ws,\zs)$ for $\bL$. Write $\as=(\alpha_1,\dots, \alpha_n)$
and $\bs=(\beta_1,\dots, \beta_n)$ for the attaching curves, and consider the
two half-dimensional tori
\[
\bT_{\a}:=\alpha_1\times \cdots \times  \alpha_n\qquad \text{and}\qquad \bT_{\b}:=\beta_1\times \cdots \times \beta_n
\]
in $\Sym^n(\Sigma)$. The module $\widehat{\CFL}(\bL)$ is defined to be
the free $\bF_2$-module generated by the intersection points $\bT_{\a}\cap \bT_{\b}$.
The module $\CFL^-(\bL)$ is the free $\bF_2[v]$-module generated
by $\bT_{\a}\cap \bT_{\b}$. The differential $\widehat{\d}$ on
$\widehat{\CFL}(\bL)$ counts rigid pseudo-holomorphic disks in $\Sym^n(\Sigma)$ with
multiplicity zero on $\ws\cup \zs$. The differential on $\CFL^-(\bL)$ is
given by
\begin{equation}
\d^-\xs=\sum_{\ys\in \bT_\a\cap \bT_{\b}}\sum_{\substack{\phi\in \pi_2(\xs,\ys)\\ \mu(\phi)=1\\ n_{\ws}(\phi)=0}} \# (\cM(\phi)/\R)  v^{n_{\zs}(\phi)}\cdot \ys, \label{eq:differential-minus}
\end{equation}
extended equivariantly over $\bF_2[v]$. The modules $\widehat{\HFL}(\bL)$ and $\HFL^-(\bL)$
are the homologies of $\widehat{\CFL}(\bL)$ and $\CFL^-(\bL)$, respectively.

The module $\HFL^-(\bL)$ decomposes (non-canonically) as
\[
\HFL^-(\bL)\iso \left( \bigoplus_{i=1}^{2^{k-1}} \bF_2[v]\right)\oplus \HFL_{\red}^-(\bL),
\]
where $k=|\ws|=|\zs|$ and $\HFL^-_{\red}(\bL)$ denotes the $\bF_2[v]$-torsion
submodule of $\HFL^-(\bL)$. Since $\HFL^-(\bL)$ admits a relative
$\Z$-grading where $v$ has grading $+1$ (the Alexander grading), the module
$\HFL^-_{\red}(\bL)$ is always isomorphic to a direct sum of modules of the form
$\bF_2[v]/(v^i)$ for $i \ge 0$. In particular, $v^l$ annihilates $\HFL^-_{\red}(\bL)$ for
all sufficiently large $l \in \N$, and hence $\Ord_v(K)$ is always finite.

There is a symmetric version of knot Floer homology that commonly appears in the literature. It is freely generated over $\bF[u]$ by intersection points $\xs\in \bT_{\a}\cap \bT_{\b}$, and its differential counts disks with $n_{\zs}(\phi)=0$, which are weighted by $u^{n_{\ws}(\phi)}$. In this setting, the variable $u$ has Maslov index $-2$, and Alexander grading $-1$.


If $\bK=(K,w,z)$ is a doubly-based knot, then, by definition, the link Floer
homology groups coincide with the knot Floer homology groups; i.e.,
$\HFK^-(\bK)\iso \HFL^-(\bK)$. Following standard notation, we will usually
write $\HFK^-(K)$ instead of $\HFK^-(\bK)$.

Ozsv\'{a}th and Szab\'{o}'s connected sum formula \cite{OSKnots}*{Theorem~7.1} implies that
\[
\CFK^-(K_1\# K_2)\iso \CFK^-(K_1)\otimes_{\bF_2[v]} \CFK^-(K_2).
\]
Consequently, by the K\"{u}nneth theorem for chain complexes over $\bF_2[v]$, we have
\begin{equation}
\Ord_{v}(K_1\# K_2)=\max\left\{\Ord_v(K_1), \Ord_v (K_2)\right\}.
\label{eq:connected-sum-Ord}
\end{equation}

Ozsv\'{a}th and Szab\'{o} also proved that the mirror of a knot has dual knot Floer homology:
\[
\CFK^-(\bar{K})\iso \Hom_{\bF_2[v]}(\CFK^-(K), \bF_2[v]).
\]
(The proof is the same as for the closed 3-manifold invariants; see Ozsv\'ath and Szab\'o \cite{OSTriangles}*{Section~5.1}).
Consequently,
\begin{equation}
\Ord_{v}(\bar{K}) = \Ord_v(K).
\label{eq:Ord-mirror}
\end{equation}
Combining equations~\eqref{eq:connected-sum-Ord} and~\eqref{eq:Ord-mirror}, we obtain that
\begin{equation}\label{eqn:sum}
\Ord_v(K \# \bar{K}) = \Ord_v(K),
\end{equation}
a result that we will use repeatedly.

\subsection{The link Floer TQFT}

We will be interested in the functorial aspects of link Floer homology.
A \emph{decorated link cobordism} between two multi-based links $\bL_0=(L_0,\ws_0,\zs_0)$ and $\bL_1=(L_1,\ws_1,\zs_1)$ is a pair $\cF=(S,\cA)$, as follows:
\begin{enumerate}
\item  $S$ is a smooth, properly embedded, oriented surface in $[0,1]\times S^3$ such that
\[
\d S=(-\{0\}\times L_0)\cup (\{1\}\times L_1).
\]
\item $\cA\subset S$ is a finite collection of properly embedded arcs, such that $S\setminus \cA$ consists of two disjoint subsurfaces, $S_{\ws}$ and $S_{\zs}$. Further, $\ws\subset S_{\ws}$ and $\zs\subset S_{\zs}$.
\end{enumerate}
Figure~\ref{fig::2} shows some examples of decorated link cobordisms.

For a decorated link cobordism $\cF$ from $\bL_0$ to $\bL_1$, there are cobordism maps
\[
\widehat{F}_{\cF}\colon \widehat{\HFL}(\bL_0)\to \widehat{\HFL}(\bL_1)\qquad \text{and} \qquad F_{\cF}\colon \HFL^-(\bL_0)\to \HFL^-(\bL_1).
\]
The construction of the map $\widehat{F}_{\cF}$ is due to the first author
\cite{JCob}, using the contact gluing map of Honda, Kazez, and Mati\'{c}~\cite{HKMTQFT}.
The third author~\cite{ZemCFLTQFT} subsequently gave an
alternate construction which also works on the minus version. Their
equivalence on the hat version was proven by the first and third authors
\cite{JZContactHandles}*{Theorem~1.4}.

The link cobordism maps satisfy a simple relation with respect to adding tubes:

 \begin{lem}\label{lem:stabilization}
 Suppose that $\cF=(S,\cA)$ is a decorated link cobordism from $\bL_0$ to
 $\bL_1$. Suppose that $\cF'$ is a decorated link cobordism obtained by
 adding a tube to $\cF$, with both feet in the $S_{\zs}$ subregion of $S$;
 see Figure~\ref{fig::2}. Then
 \[
F_{\cF'}=v\cdot F_{\cF}.
 \]
 \end{lem}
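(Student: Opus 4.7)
The plan is to verify the formula by a local handle computation, exploiting the fact that the added tube is supported in a 4-ball disjoint from $\bL_0$ and $\bL_1$.

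First, I would invoke the composition and locality axioms of the link Floer TQFT from \cite{ZemCFLTQFT} to reduce the statement to the following local assertion: if $\cT$ is a decorated cobordism supported in a ball $B$, meeting $\partial B$ in a single disk $D\subset S_\zs$ at the bottom and replacing $D$ by a once-punctured torus $\Sigma_{1,1}$ at the top (sharing the same boundary circle on $\partial B$), with all of the added handle decorated as $\zs$-region, then $F_\cT$ acts as multiplication by $v$ on the local generator. Given this, the composition axiom immediately yields $F_{\cF'} = v\cdot F_\cF$ since $\cF'$ is obtained from $\cF$ by splicing in $\cT$ along $D$.

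To compute the local map, I would fix a Morse-theoretic decomposition of $\cT$ into two oriented surface $1$-handles (saddles), each attached inside the $\zs$-region, carrying $D$ to $\Sigma_{1,1}$: concretely, one saddle splits $D$ into an annulus, and a second saddle glues the resulting two boundary circles to produce $\Sigma_{1,1}$. Using Zemke's explicit description of a saddle map in terms of a triangle count on an associated Heegaard triple, I would compose the two maps on the single generator of the cylindrical local Floer complex $\widehat{\CFL}$ of the dividing circle in $\partial B$. Both bands lie entirely in $S_\zs$, so the triangle counts register no $w$-basepoints and exactly one net $z$-basepoint, producing a factor of $v$ overall; this reproduces the standard genus-stabilization computation for a band in $S_\zs$ that is implicit in \cite{ZemCFLTQFT} and was also used in \cite{JZContactHandles}. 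An equivalent route is to rewrite $\cT$ as a connected sum (along an arc in $S_\zs$) with a closed decorated torus $T_0$ carrying one $\cA$-circle that bounds a tiny $\ws$-disk; then by the disjoint-union/connected-sum formula, the lemma reduces to $F_{T_0} = v$, which is the same triangle computation.

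The main obstacle is bookkeeping rather than geometry: one must choose admissible decorated diagrams at each intermediate stage and track the arcs in $\cA$ so that both saddles and all intermediate decorated cobordisms remain valid and supported in $S_\zs$. Once this combinatorial setup is fixed, the identity $F_\cT = v\cdot\id$ is an immediate consequence of the explicit saddle-map formula of \cite{ZemCFLTQFT}, and the lemma follows.
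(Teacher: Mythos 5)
First, note that the paper does not prove this lemma at all: it is quoted from \cite{JZStabilization}*{Lemma~5.3}, so your proposal has to stand on its own as a proof, and as written it does not. The very first step already fails to parse in this theory: link Floer functoriality, as constructed in \cite{JCob, ZemCFLTQFT}, only allows composition along horizontal $S^3$-slices carrying multi-based links. There is no ``locality axiom'' permitting you to cut along the boundary of a 4-ball $B$ meeting the surface transversally, and there is no ``local Floer complex of the dividing circle in $\partial B$'' to evaluate your local piece $\cT$ on. The reduction to a local model can be salvaged only by isotoping the tube into a vertical slab over which the rest of $S$ is a product cylinder and then applying the composition law to an honest decorated concordance; this needs to be argued, not invoked.

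The local computation has a more substantive gap. If the entire handle is decorated as $\zs$-region (as the lemma requires, and as is essential for the applications in this paper, where the $\ws$-region is just a neighborhood of an arc and the boundary links are doubly-based knots), then in a movie realizing the tube as a fission saddle followed by a fusion saddle, the intermediate slice contains a circle disjoint from the dividing set, hence with no $\ws$-basepoint. That is not a multi-based link, so the two saddle maps you propose to compose are not both defined and no cut between the two saddles is allowed; making the decomposition legal forces extra dividing arcs, i.e.\ quasi-stabilizations (births/deaths of basepoint pairs), onto the tube, and then the factor of $v$ emerges from the algebraic relations among quasi-stabilization and band maps rather than from your asserted count. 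Indeed, the claim that ``the triangle counts register no $w$-basepoints and exactly one net $z$-basepoint'' is not a valid argument: a single band map is not multiplication by $v$, and the $z$-multiplicities of the holomorphic triangles entering a band map are not controlled merely by the band lying in $S_{\zs}$; the identity for the composite is a genuine model computation (or a citation of the relevant relations in \cite{ZemCFLTQFT, JZStabilization}), not a tautology. Finally, your alternative route is circular: writing $\cF'$ as an internal connected sum with a closed decorated torus $T_0$ and asserting $F_{\cF'} = F_{T_0}\cdot F_{\cF}$ already presupposes a tubing/neck-cutting formula of exactly the type being proven. So the outline has the right flavor, but the crucial steps are either unavailable in the TQFT as stated or are asserted without justification.
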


A proof of Lemma~\ref{lem:stabilization} can be found in \cite{JZStabilization}*{Lemma~5.3}. We note that if we add a tube with feet in $S_{\ws}$, then the induced map is zero on $\HFL^-$. More generally, in Section~\ref{sec:generalized-torsion}, we consider a version of knot Floer homology over the 2-variable polynomial ring $\bF[u,v]$. In this setting, adding a tube to $S_{\zs}$ has the effect of multiplication by $v$, while adding a tube to $S_{\ws}$ has the effect of multiplication by $u$.

\begin{figure}[ht!]
	\centering
\begingroup%
  \makeatletter%
  \providecommand\color[2][]{%
    \errmessage{(Inkscape) Color is used for the text in Inkscape, but the package 'color.sty' is not loaded}%
    \renewcommand\color[2][]{}%
  }%
  \providecommand\transparent[1]{%
    \errmessage{(Inkscape) Transparency is used (non-zero) for the text in Inkscape, but the package 'transparent.sty' is not loaded}%
    \renewcommand\transparent[1]{}%
  }%
  \providecommand\rotatebox[2]{#2}%
  \newcommand*\fsize{\dimexpr\f@size pt\relax}%
  \newcommand*\lineheight[1]{\fontsize{\fsize}{#1\fsize}\selectfont}%
  \ifx\svgwidth\undefined%
    \setlength{\unitlength}{278.34231062bp}%
    \ifx\svgscale\undefined%
      \relax%
    \else%
      \setlength{\unitlength}{\unitlength * \real{\svgscale}}%
    \fi%
  \else%
    \setlength{\unitlength}{\svgwidth}%
  \fi%
  \global\let\svgwidth\undefined%
  \global\let\svgscale\undefined%
  \makeatother%
  \begin{picture}(1,0.5006601)%
    \lineheight{1}%
    \setlength\tabcolsep{0pt}%
    \put(0,0){\includegraphics[width=\unitlength,page=1]{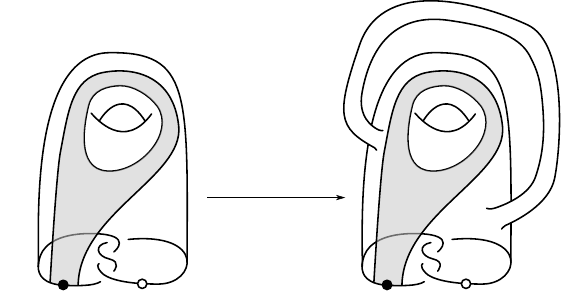}}%
    \put(0.03949863,0.12824683){\color[rgb]{0,0,0}\makebox(0,0)[rt]{\lineheight{1.25}\smash{\begin{tabular}[t]{r}$\cF$\end{tabular}}}}%
    \put(0.9565508,0.12824683){\color[rgb]{0,0,0}\makebox(0,0)[lt]{\lineheight{1.25}\smash{\begin{tabular}[t]{l}$\cF'$\end{tabular}}}}%
  \end{picture}%
\endgroup%

	\caption{Stabilizing a surface in the $\zs$-subregion.}\label{fig::2}
\end{figure}

\section{Knot Floer homology and the cobordism distance}

We begin with the main technical result needed for Theorem~\ref{thm:cobordism-bound}:

\begin{prop}\label{prop:main-thm-maps}
Suppose that $S$ is a connected, oriented knot cobordism from $K_0$ to $K_1$ in $[0,1] \times S^3$
with $m$ local minima, $b$ saddles, and $M$ local maxima, and suppose that $\cF=(S,\cA)$ is a
decoration of $S$ such that the type-$\ws$ region is a regular neighborhood of
an arc running from $K_0$ to $K_1$. Let
$\bar{\cF}$ denote the cobordism from $K_1$ to $K_0$ obtained by horizontally mirroring $\cF$. Then
\[
v^M\cdot F_{\bar{\cF}}\circ F_{\cF}=v^{b-m}\cdot \id_{\HFK^-(K_0)}.
\]
\end{prop}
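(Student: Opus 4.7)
The plan is to use functoriality of the link Floer TQFT to rewrite $F_{\bar\cF}\circ F_\cF$ as $F_{\bar\cF\circ\cF}$, and then to analyze the composed cobordism $\bar\cF\circ\cF$ via ambient isotopy in $[0,1]\times S^3$, combined with the stabilization formula of Lemma~\ref{lem:stabilization}. This extends the ``mirror doubling'' technique used by the third author in~\cite{ZemRibbon} for ribbon concordances to cobordisms with local minima and maxima.

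Topologically, $\bar\cF\circ\cF$ is a connected decorated cobordism from $\bK_0$ to itself of genus $2g(S)$, whose $\ws$-region is a strip neighborhood of the arc obtained by joining the $\ws$-arcs of $\cF$ and $\bar\cF$ along $K_1$. First I would ambient isotope this surface, rel boundary, to a standard form. Each of the $M$ local maxima of $\cF$ is paired with the mirror local minimum in $\bar\cF$; after pushing such pairs together one obtains, for each, either a $\zs$-tube attached to the main component or a disjoint closed $\zs$-sphere. The $m$ minima of $\cF$ and the $m$ mirror maxima of $\bar\cF$ can be cancelled against matching pairs of saddles by Morse-theoretic handle cancellations, which preserve the ambient isotopy class. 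After all simplifications, the main component of $\bar\cF\circ\cF$ becomes ambient isotopic to $[0,1]\times\bK_0$ with some number $N$ of $\zs$-tubes attached, possibly together with extra disjoint $\zs$-spheres.

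Next I would compute the map. By Lemma~\ref{lem:stabilization}, each $\zs$-tube multiplies the cobordism map by $v$, so the main component contributes $v^N\cdot\id$. A disjoint closed $\zs$-sphere, while not directly covered by Lemma~\ref{lem:stabilization}, can be handled by a similar stabilization argument and contributes a factor that is absorbed after multiplying by $v$ (which is the role of the $v^M$ on the left-hand side). Counting the tubes produced by the cancellations, together with the Euler characteristic identity $b-m-M = 2g(S)$ (which follows from $\chi(S) = m-b+M = -2g(S)$), the arithmetic works out so that
\[
v^M\cdot F_{\bar\cF\circ\cF} \;=\; v^{M+2g(S)}\cdot \id \;=\; v^{b-m}\cdot\id_{\HFK^-(K_0)}.
\]

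The main obstacle is making the ambient isotopy argument rigorous and carefully tracking the decoration through every isotopy and handle move. The hypothesis that the $\ws$-region of $\cF$ is a strip neighborhood of a single arc is essential: it forces the $\ws$-part of $\bar\cF\circ\cF$ to remain ``thin'', so that all the topological complications (tubes and bubbles produced by the max/min pairs and by the surviving saddles) occur in the $\zs$-region, where Lemma~\ref{lem:stabilization} applies cleanly. A tube in the $\ws$-region would instead produce a factor of $u$, which vanishes in the $\bF_2[v]$-module structure we are working with and would collapse the argument.
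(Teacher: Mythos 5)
Your overall strategy is the same as the paper's: use the composition law to study $F_{\bar\cF\circ\cF}$, arrange the critical points into a standard movie, and trade the doubled births/deaths and doubled saddles for tubes in the $\zs$-region so that Lemma~\ref{lem:stabilization} produces the powers of $v$. However, there is a genuine gap at the crucial step where you assert that, after cancelling the $m$ minima of $\cF$ against their mirror maxima ``by Morse-theoretic handle cancellations, which preserve the ambient isotopy class,'' the doubled surface becomes \emph{ambient isotopic} to $[0,1]\times K_0$ with some $\zs$-tubes and split $\zs$-spheres attached. This is false in general: the fusion bands of $\cF$ (and their mirrors) may be knotted and linked with the rest of the surface, so the double of the birth--fusion portion is the identity concordance with $m$ two-spheres tubed in along possibly knotted tubes, and such a surface need not be isotopic rel boundary to a standardly stabilized product. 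If it were, the Floer-theoretic content of \cite{ZemRibbon}*{Theorem~1.7} would reduce to isotopy invariance, which it does not. The paper handles exactly this point not by isotopy but by invoking the proof of \cite{ZemRibbon}*{Theorem~1.7}, which shows Floer-theoretically that tubing $2$-spheres into the identity concordance does not change the cobordism map; your argument never supplies this input, and without it the identification $F_{\cG'}=\id_{\HFK^-(K_0)}$ is unjustified.

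A secondary, related weakness is the bookkeeping of the exponents. In the paper the factor $v^M$ arises because one \emph{adds} $M$ tubes to $\bar\cF\circ\cF$ (replacing each death--rebirth pair by a cylinder), so that $F_{\cG}=v^M F_{\bar\cF\circ\cF}$, and the factor $v^{b-m}$ arises because the doubled saddles of step \eqref{movie:3} form $b-m$ tubes on $\cG$, giving $F_{\cG}=v^{b-m}F_{\cG'}$. Your proposal instead speaks of max/min pairs producing ``either a $\zs$-tube or a disjoint closed $\zs$-sphere,'' with split spheres ``absorbed after multiplying by $v$,'' and then claims ``the arithmetic works out'' via $\chi(S)$. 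Split closed components are not covered by Lemma~\ref{lem:stabilization}, and the precise identity $v^M\cdot F_{\bar\cF}\circ F_{\cF}=v^{b-m}\cdot\id$ (an equality of maps, not just an order-of-vanishing estimate) requires the exact tube count on each side; as written, this step is too vague to certify. To repair the proof you should follow the tube-trading scheme above and, at the final stage, replace the isotopy claim by the sphere-tubing invariance from \cite{ZemRibbon}.
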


\begin{proof}
We can rearrange the critical points of $S$ so that $S$ has a movie of the following form:
\begin{enumerate}[leftmargin=2cm, label=($M$-\arabic*), ref=$M$-\arabic*]
\item\label{movie:1} $m$ births, which add $m$ unknots $U_1,\dots, U_m$.
\item\label{movie:2} $m$ fusion saddles $B_1,\dots, B_m$, which merge $U_1,\dots,U_m$ with $K_0$.
\item\label{movie:3} $b-m$ additional saddles, along bands $B_{m+1},\dots, B_b$.
\item\label{movie:4} $M$ deaths, corresponding to deleting unknots $U_1',\dots, U_M'$.
\end{enumerate}
We can give a movie with 8 steps for $\bar{\cF}\circ \cF$ by first playing
\eqref{movie:1}--\eqref{movie:4} forward, and then playing them backward, in
reverse order. The fourth step of this 8-step movie is to delete the unknots
$U_1',\dots, U_M'$ via $M$ deaths. The fifth step is to add them back with
$M$ births. Consider the cobordism $\cG$ obtained by deleting these two
levels. The cobordism $\cG$ is obtained by attaching $M$ tubes to
$\bar{\cF}\circ \cF$. Since the decoration of $\cG$ is such that the
type-$\ws$ region is a neighborhood of an appropriate arc from the incoming
$K_0$ to the outgoing $K_0$, the cobordism $\cG$ is obtained by
attaching $M$ tubes to the $\zs$-subregion of $\bar{\cF}\circ \cF$.
Consequently, Lemma~\ref{lem:stabilization} implies that
\begin{equation}
F_{\cG}=v^M\cdot F_{\bar{\cF}\circ \cF}.\label{eq:FG=vmFF}
\end{equation}

The cobordism $\cG$ has the movie obtained by playing \eqref{movie:1},
\eqref{movie:2}, and \eqref{movie:3} forward, and then playing them backward,
in reverse order. The third and fourth steps of this movie describe $b-m$
tubes, added to a cobordism $\cG'$, which is obtained by first playing
\eqref{movie:1} and \eqref{movie:2}, and then playing them backwards, in
reversed order. By
Lemma~\ref{lem:stabilization}, we obtain
\begin{equation}
F_{\cG}=v^{b-m}\cdot F_{\cG'}.\label{eq:FG=vb-mG'}
\end{equation}

Finally, $\cG'$ is obtained by playing~\eqref{movie:1} and \eqref{movie:2},
and then playing them backwards, in reverse order. The births and deaths
determine 2-spheres $S_1,\dots, S_m$, and the bands and their reverses
determine tubes. Hence $\cG'$ is the cobordism obtained by tubing in the
spheres $S_1,\dots, S_m$ to the identity concordance $[0,1]\times K_0$. The
proof of \cite{ZemRibbon}*{Theorem~1.7} implies immediately that tubing in
spheres in this manner does not affect the cobordism map, so
\begin{equation}
F_{\cG'}=\id_{\HFK^-(K_0)}.\label{eq:G'=id}
\end{equation}
Combining Equations~\eqref{eq:FG=vmFF}, \eqref{eq:FG=vb-mG'},
and \eqref{eq:G'=id} yields the statement.
\end{proof}

Our main theorem is now an algebraic consequence of Proposition~\ref{prop:main-thm-maps}:

\begin{customthm}{\ref{thm:cobordism-bound}}
Suppose there is an oriented knot cobordism $S$ from $K_0$ to $K_1$ with $M$ local maxima. Then
\[
\Ord_v(K_0) \le \max\{M ,\Ord_v(K_1)\} + 2g(S).
\]
\end{customthm}

\begin{proof}
Let $\cF$ denote the cobordism obtained by decorating $S$ such that the
$\ws$-subregion is a regular neighborhood of an arc running from $K_0$ to $K_1$.
Let $\bar{\cF}$ denote the cobordism from $K_1$ to $K_0$ obtained by
horizontally mirroring $\cF$. Proposition~\ref{prop:main-thm-maps} implies that
\begin{equation}
v^M\cdot F_{\bar{\cF}\circ \cF} = v^{b-m}\cdot \id_{\HFK^-(K_0)},
\label{eq:restate-main-prop}
\end{equation}
where $m$ is the number of local minima and $b$ is the number of saddles on $S$.

Since
\[
F_{\bar{\cF}\circ \cF}=F_{\bar{\cF}}\circ F_{\cF}
\]
by the composition law, it follows that, if $x\in \HFK^-_{\red}(K_0)$,
then $F_{\bar{\cF}\circ \cF}(v^j \cdot x)=0$ if $j \ge \Ord_v(K_1)$.
On the other hand, equation~\eqref{eq:restate-main-prop} implies that
\[
F_{\bar{\cF}\circ \cF}(v^{l+M}\cdot x)=v^{b-m+l}\cdot x
\]
for all $l \ge 0$. Consequently, if $l \ge \max\{ 0,\Ord_v(K_1)-M\}$,
then $v^{b-m+l}\cdot x=0$. It follows that
\[
\Ord_{v}(K_0)\le b-m+\max\{0,\Ord_v(K_1)-M\} = \max\{M ,\Ord_v(K_1)\} + 2g(S),
\]
since $2g(S) = -\chi(S) = b - m - M$.
\end{proof}

\section{Torus knots and sharpness}
\label{sec:HFK-torus-knots}
An \emph{L-space} is a rational homology 3-sphere $Y$ such that
$\widehat{\HF}(Y,\frs)\iso \bF_2$ for each $\frs\in \Spin^c(Y)$ (this is the
smallest possible rank for a rational homology sphere). Lens spaces are
examples of L-spaces. An \emph{L-space knot} is a knot $K$ in $S^3$ such that
$S^3_p(K)$ is an L-space for some $p\in \Z$. If $p$, $q > 0$ are coprime, the
torus knot $T_{p,q}$ is an L-space knot since $pq\pm 1$ surgery on $T_{p,q}$
is the lens space $L(pq\pm 1, q^2)$.

Ozsv\'{a}th and Szab\'{o} \cite{OSLSpaceSurgeries}*{Theorem~1.2} proved that
the knot Floer homology of an L-space knot is completely determined by its
Alexander polynomial. Furthermore, they showed
\cite{OSLSpaceSurgeries}*{Corollary~1.3} that the Alexander polynomial of an
L-space knot can be written as
\[
\Delta_K(t)=\sum_{k=0}^{2n} (-1)^{k} t^{\alpha_k}
\]
for a decreasing sequence of integers $\alpha_0,\dots, \alpha_{2n}$.
Their computation implies the following:

\begin{lem}\label{lem:torsion-order-L-space-knot}
If $K$ is an $L$-space knot, and $\alpha_0,\dots, \alpha_{2n}$ are the
non-zero degrees appearing in the Alexander polynomial of $K$, written in
decreasing order, then
\[
\Ord_v(K)=\max \{\alpha_{i-1}-\alpha_{i}:1\le i\le 2n\}.
\]
\end{lem}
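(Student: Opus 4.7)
The plan is to appeal to Ozsv\'ath and Szab\'o's structure theorem for the knot Floer complex of an L-space knot, read off the torsion submodule of $\HFK^-(K)$ directly, and then use the palindromic symmetry of the Alexander polynomial to recast a maximum over ``every other gap'' as a maximum over all consecutive gaps.

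By \cite{OSLSpaceSurgeries}*{Theorem~1.2}, combined with the normalization of $\Delta_K$ in \cite{OSLSpaceSurgeries}*{Corollary~1.3}, the full two-variable complex $\cCFK^-(K)$ is chain homotopy equivalent to a ``staircase'' model with free generators $\xi_0,\dots,\xi_{2n}$ over $\bF_2[u,v]$, where $\xi_k$ has Alexander grading $\alpha_k$ and the only non-trivial differentials are
\[
\partial\xi_{2i-1} \;=\; u^{\alpha_{2i-2}-\alpha_{2i-1}}\xi_{2i-2} \;+\; v^{\alpha_{2i-1}-\alpha_{2i}}\xi_{2i},\qquad i=1,\dots,n.
\]
Passing to $\CFK^-(K)$ amounts to setting $u=0$, which is consistent with the definition in Section~\ref{sec:background}, where the differential only counts disks with $n_\ws(\phi)=0$. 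The resulting $\bF_2[v]$-complex is a direct sum of a free summand on $\xi_0$ and $n$ two-term complexes
\[
\bF_2[v]\cdot\xi_{2i-1}\;\xrightarrow{\;v^{\alpha_{2i-1}-\alpha_{2i}}\;}\;\bF_2[v]\cdot\xi_{2i}.
\]
Since multiplication by any power of $v$ is injective on $\bF_2[v]$, taking homology yields
\[
\HFK^-(K)\;\iso\;\bF_2[v]\;\oplus\;\bigoplus_{i=1}^{n}\bF_2[v]/\bigl(v^{\alpha_{2i-1}-\alpha_{2i}}\bigr),
\]
so that $\Ord_v(K) = \max_{1\le i\le n}(\alpha_{2i-1}-\alpha_{2i})$.

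To finish, note that the symmetry $\Delta_K(t)=\Delta_K(t^{-1})$ forces $\alpha_k=-\alpha_{2n-k}$, so the gap function $g(j):=\alpha_{j-1}-\alpha_j$ satisfies $g(j)=g(2n+1-j)$. Because $j$ and $2n+1-j$ have opposite parities, each ``even-indexed'' gap $g(2i)$ is paired with an equal ``odd-indexed'' gap, and therefore
\[
\max_{1\le i\le n}(\alpha_{2i-1}-\alpha_{2i}) \;=\; \max_{1\le j\le 2n}(\alpha_{j-1}-\alpha_j),
\]
which is exactly the formula in the statement.

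The main (mild) obstacle is bookkeeping: fixing the correct staircase model with respect to the paper's conventions, and identifying which parity class of gaps survives when one specializes $\cCFK^-$ to $\CFK^-$. Thanks to the Alexander polynomial symmetry, however, the asymmetry between the ``$u$-side'' and ``$v$-side'' of the staircase is invisible in the final answer, so the choice of convention does not affect the result.
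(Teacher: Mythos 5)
Your proof is correct and follows essentially the same route as the paper: both invoke Ozsv\'ath and Szab\'o's staircase structure theorem for L-space knots, specialize to the one-variable minus complex to read off the torsion submodule as $\bigoplus_i \bF_2[v]/(v^{\alpha_{2i-1}-\alpha_{2i}})$, and then use the palindromic symmetry of the Alexander polynomial gaps to replace the maximum over one parity class of gaps by the maximum over all gaps. The only cosmetic difference is that you phrase the staircase in terms of $\cCFK^-(K)$ over $\bF_2[u,v]$ and set $u=0$, whereas the paper describes $\CFK^\infty(K)$ with its $\Z\oplus\Z$-filtration and reads off $\CFK^-(K)$ from that; the content is identical.
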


\begin{proof}
Mirror $K$ if necessary so that large positive surgeries on $K$ yield $L$-spaces. (As we have defined $L$-space knots, it might be that originally large negative surgeries on $K$ yield $L$-spaces.)
We first describe Ozsv\'{a}th and Szab\'{o}'s computation of
$\CFK^\infty(K)$. Note that Ozsv\'{a}th and Szab\'{o} only stated their
computation for $\widehat{\HFK}(K)$, though their proof works for
$\CFK^\infty(K)$; see \cite{OSSUpsilon}*{Theorem~2.10}.
Let $d_1,\dots, d_{2n}$ denote the gaps between the integers $\alpha_0,\dots, \alpha_{2n}$; i.e.,
\begin{equation}\label{eq:gaps-alexander}
d_{i}:=\alpha_{i-1}-\alpha_{i}.
\end{equation}
Ozsv\'{a}th and Szab\'{o} proved that $\CFK^\infty(K)$ is chain homotopy
equivalent to the staircase complex with generators $x_0,\dots, x_{2n}$ over
$\bF_2[U,U^{-1}]$, with the following differential:
\[
\d x_{2k}=0 \qquad \text{and} \qquad \d x_{2k+1}=x_{2k}+x_{2k+2}.
\]
Up to an overall shift, the $\Z\oplus \Z$-filtration  is determined by the following:
\begin{itemize}
\item The element $x_{2k}$ has the same $j$-filtration as $x_{2k+1}$,
   but the $i$-filtration differs by $d_{2k+1}$.
\item  The element $x_{2k+2}$ has the same $i$-filtration as $x_{2k+1}$,
   but the $j$-filtration differs by $d_{2k+2}$.
\end{itemize}
See Figure~\ref{fig::3} for a schematic of the staircase complex, as well as an example.

\begin{figure}[ht!]
	\centering
	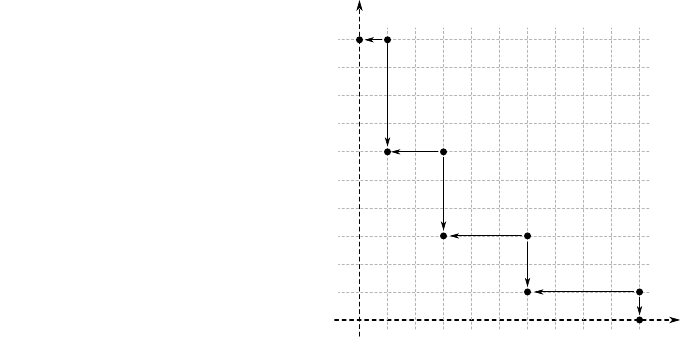
	\caption{The generators of $\CFK^\infty(K)$ on the left, for an L-space knot $K$.
     On the right is $\CFK^\infty(T_{5,6})$. The symmetrized Alexander polynomial of $T_{5,6}$ is
     $\Delta_{T_{5,6}}(t)=t^{10}-t^9+t^5-t^3+1-t^{-3}+t^{-5}-t^{-9}+t^{-10}$.}
\label{fig::3}
\end{figure}

The minus version $\CFK^-(K)$ can be read off from the above description of $\CFK^\infty(K)$, as follows:
There is one generator $y_i$ over $\bF_2[v]$ for each $x_i$. The differential satisfies
\[
\d^- y_{2k}=0\qquad \text{and} \qquad\d^-y_{2k+1}=v^{d_{2k+2}}\cdot y_{2k+2}.
\]
Consequently, when $K$ is an L-space knot, $\Ord_v(K) = \max \{ d_{2k+2}:0\le k\le n-1\}$.
Since the Alexander polynomial is symmetric, we have $d_{2k+1}=d_{2n-2k}$, so
\[
\Ord_v(K)=\max \{ d_i:1\le i\le 2n\},
\]
as claimed.
\end{proof}

We now need an elementary result concerning the Alexander polynomial of torus knots:

\begin{lem}\label{lem:Alex-poly-torus} If $p$ and $q$ are coprime, positive integers,
then the first three terms of the symmetrized Alexander polynomial of $T_{p,q}$ are
\[
\Delta_{T_{p,q}}(t)=t^d-t^{d-1}+t^{d-\min\{p,q\}}+\cdots,
\]
where $d=\frac{(p-1)(q-1)}{2}$.
\end{lem}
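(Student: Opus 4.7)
The plan is direct computation from the classical formula
\[
\Delta_{T_{p,q}}(t) = \frac{(t^{pq}-1)(t-1)}{(t^p-1)(t^q-1)},
\]
reading the three leading monomials off a Laurent expansion at $t=\infty$. I will assume without loss of generality that $2 \le p < q$; the case $p=1$ makes $T_{p,q}$ the unknot and $d=0$, so the claim is vacuous.

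The first step is to expand
\[
\frac{1}{(t^p-1)(t^q-1)} = t^{-p-q} \sum_{i,j \ge 0} t^{-ip - jq}
\]
and identify the top three terms of the inner sum. The exponents $-ip - jq$ closest to zero are $0$, $-p$, and then $-\min(2p, q)$ (the tie $2p = q$ is ruled out by $\gcd(p,q) = 1$ together with $p \ge 2$), so the leading three terms of the inner sum are $1 + t^{-p} + t^{-\min(2p,q)}$.

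The second step is to multiply by $(t^{pq}-1)(t-1) = t^{pq+1} - t^{pq} - t + 1$ and collect contributions to the top three exponents. Writing $2d = pq - p - q + 1$, the three largest-exponent products are
\begin{align*}
t^{pq+1}\cdot t^{-p-q} &= t^{2d}, \\
-t^{pq}\cdot t^{-p-q} &= -t^{2d-1}, \\
t^{pq+1}\cdot t^{-2p-q} &= t^{2d-p}.
\end{align*}
All other products have strictly smaller exponent: the closest contenders, $-t^{pq}\cdot t^{-2p-q} = -t^{2d-p-1}$ and $t^{pq+1}\cdot t^{-p-2q} = t^{2d-q}$, both lie below $2d - p$ since $p \ge 2$ and $q > p$; the contributions from $-t$ and $+1$ have exponent at most $1 - p - q$, far below. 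Hence $\Delta_{T_{p,q}}(t) = t^{2d} - t^{2d-1} + t^{2d-p} + (\text{lower})$, and multiplying by $t^{-d}$ to symmetrize gives
\[
\Delta_{T_{p,q}}(t) = t^d - t^{d-1} + t^{d-\min\{p,q\}} + \cdots,
\]
as claimed.

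The only real obstacle is the bookkeeping: one must confirm that no additional $(i,j)$ pair contributes a monomial of exponent $\ge 2d - p$ besides the three identified, and that no unexpected cancellation occurs among the three leading terms. Both checks reduce to the elementary inequalities $p < q$ and $p \ge 2$.
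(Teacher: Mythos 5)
Your proof is correct and takes essentially the same route as the paper: both start from the classical formula $\Delta_{T_{p,q}}(t)=t^{-d}\frac{(t^{pq}-1)(t-1)}{(t^p-1)(t^q-1)}$ and extract the three top-degree terms by direct coefficient bookkeeping. The paper rearranges to $t^d(t^{p-1}+\cdots+1)(t^{q-1}+\cdots+1)\Delta_{T_{p,q}}(t)=t^{pq-1}+\cdots+1$ and leaves the extraction as a ``straightforward algebraic exercise''; your Laurent (geometric series) expansion at $t=\infty$ is a valid way of carrying out exactly that exercise, with the relevant inequalities ($p\ge 2$, $q>p$, coprimality ruling out $2p=q$) correctly invoked.
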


\begin{proof} Write
\begin{equation}
\Delta_{T_{p,q}}(t)=t^{-d}\frac{(t^{pq}-1)(t-1)}{(t^p-1)(t^q-1)}.\label{eq:alex-poly-torus}
\end{equation}
Canceling factors of $t-1$ in Equation~\eqref{eq:alex-poly-torus} and rearranging, we obtain
\begin{equation}
t^d(t^{p-1}+\cdots +1)(t^{q-1}+\cdots +1)\Delta_{T_{p,q}}(t)=t^{pq-1}+t^{pq-2}+\cdots +1.
\label{eq:alex-poly-torus2}
\end{equation}
It is a straightforward algebraic exercise to see that
 Equation~\eqref{eq:alex-poly-torus2} implies that the first three terms of $\Delta_{T_{p,q}}(t)$ are as claimed.
\end{proof}

We are now ready to show that our bounds in Corollaries~\ref{cor:fusion-number-bound} and~\ref{cor:bridge-number}
on the fusion number and the bridge index in terms of the torsion order are sharp:

\begin{cor}\label{cor:torsion-order-Tpq}
Let $T_{p,q}$ be a torus knot with $q > 0$. Then
\[
\Ord_v(T_{p,q}) = \Ord_v(T_{p,q} \# \bar{T}_{p,q}) = \Fus(T_{p,q}\# \bar{T}_{p,q})=\br(T_{p,q})-1 = \min\{|p|,q\} - 1.
\]
\end{cor}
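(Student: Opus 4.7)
The plan is to chain together the bounds already established in the excerpt and then close the loop with a matching lower bound on $\Ord_v(T_{p,q})$ coming from the Alexander polynomial. Specifically, using Schubert's theorem (Equation~\eqref{eq:bridge-number-torus-knots}), the classical bound on fusion number by bridge index (Equation~\eqref{eq:fusion-bridge}), Corollary~\ref{cor:fusion-number-bound}, and Equation~\eqref{eqn:sum}, we obtain a chain
\[
\Ord_v(T_{p,q}) \;=\; \Ord_v(T_{p,q} \# \bar{T}_{p,q}) \;\le\; \Fus(T_{p,q}\# \bar{T}_{p,q}) \;\le\; \br(T_{p,q})-1 \;=\; \min\{p,q\}-1.
\]
So it suffices to prove the reverse inequality $\Ord_v(T_{p,q}) \ge \min\{p,q\}-1$, and then every inequality in the chain collapses to an equality.

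For the lower bound, I would invoke that $T_{p,q}$ is an L-space knot (since $pq\pm 1$ surgery yields the lens space $L(pq\pm 1, q^2)$, as noted at the start of Section~\ref{sec:HFK-torus-knots}), so Lemma~\ref{lem:torsion-order-L-space-knot} applies: $\Ord_v(T_{p,q})$ equals the maximum gap between consecutive exponents $\alpha_{i-1} - \alpha_i$ in the symmetrized Alexander polynomial. By Lemma~\ref{lem:Alex-poly-torus}, the three top exponents are $\alpha_0 = d$, $\alpha_1 = d-1$, and $\alpha_2 = d - \min\{p,q\}$, with $d = (p-1)(q-1)/2$. Hence the specific gap $\alpha_1 - \alpha_2 = \min\{p,q\} - 1$ occurs, which immediately gives
\[
\Ord_v(T_{p,q}) \;\ge\; \min\{p,q\}-1.
\]

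Combining the two displays yields all five quantities equal, proving the corollary. The main (minor) obstacle is making sure that the second and third terms exhibited in Lemma~\ref{lem:Alex-poly-torus} are genuinely consecutive nonzero coefficients of $\Delta_{T_{p,q}}$, so that the indices $\alpha_0,\alpha_1,\alpha_2$ in Lemma~\ref{lem:torsion-order-L-space-knot} truly match them; this is immediate from the statement of Lemma~\ref{lem:Alex-poly-torus}, which asserts the first three terms have nonzero coefficients $+1,-1,+1$. Everything else is a bookkeeping chain of previously established inequalities.
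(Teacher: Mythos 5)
Your proposal is correct and follows essentially the same route as the paper: the upper bound via $\Ord_v(T_{p,q}) = \Ord_v(T_{p,q}\#\bar{T}_{p,q}) \le \Fus(T_{p,q}\#\bar{T}_{p,q}) \le \br(T_{p,q})-1 = \min\{p,q\}-1$ using Equation~\eqref{eqn:sum}, Corollary~\ref{cor:fusion-number-bound}, and Equations~\eqref{eq:fusion-bridge} and~\eqref{eq:bridge-number-torus-knots}, and the matching lower bound from Lemmas~\ref{lem:torsion-order-L-space-knot} and~\ref{lem:Alex-poly-torus}. The only cosmetic difference is that the paper explicitly notes $T_{p,q}\#\bar{T}_{p,q}$ is ribbon before invoking the fusion-number bound, which you leave implicit.
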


\begin{proof}
All of the stated quantities agree for $T_{p,q}$ and $T_{-p,q}$, so, without loss of generality, take $p>0$. Combining Lemmas~\ref{lem:torsion-order-L-space-knot} and~\ref{lem:Alex-poly-torus}, we obtain that
\[
\Ord_v(T_{p,q})\ge \min\{p,q\} - 1.
\]
On the other hand, $T_{p,q} \# \bar{T}_{p,q}$ is ribbon,
and hence equation~\eqref{eqn:sum} and Corollary~\ref{cor:fusion-number-bound} imply that
\[
\Ord_v(T_{p,q}) = \Ord_v(T_{p,q} \# \bar{T}_{p,q}) \le \Fus(T_{p,q} \# \bar{T}_{p,q}).
\]
By equations~\eqref{eq:fusion-bridge} and~\eqref{eq:bridge-number-torus-knots}, we have
\[
\Fus(T_{p,q} \# \bar{T}_{p,q}) \le \br(T_{p,q}) - 1 = \min\{p,q\} - 1,
\]
and the result follows.
\end{proof}

Note that Corollaries~\ref{thm:band-unknotting-number} and \ref{cor:torsion-order-Tpq} imply that
\begin{equation}
ul_b(T_{p,q})\ge \Ord_v(T_{p,q})=\min \{p,q\}-1.\label{eq:band-unlinking-band-Tpq}
\end{equation}
However, Equation~\eqref{eq:relation-between-unlinking-number} and the fact that
\[
g_3(T_{p,q})=g_4(T_{p,q})=(p-1)(q-1)/2
\]
imply that
\[
ul_b(T_{p,q})=(p-1)(q-1),
\]
so Equation~\eqref{eq:band-unlinking-band-Tpq} is not a particularly good bound in this case.

\section{Sarkar's ribbon distance and knot Floer homology}
\label{sec:ribbon-distance}
Following Sarkar \cite{SarkarRibbon}, if $K$ and $K'$ are concordant
knots, then the \emph{ribbon distance} $d_r(K,K')$ is the minimal $k$ such
that there is a sequence of knots $K=K_0,K_1,\dots, K_n=K'$ such that there
exists a ribbon concordance connecting $K_i$ and $K_{i+1}$ (in either
direction) with at most $k$ saddles. If $K$ and $K'$ are not concordant, we
set $d_r(K,K')=\infty$.
The ribbon distance satisfies the following properties:
\begin{enumerate}
\item $d_r(K,K')<\infty$ if and only if $K$ and $K'$ are concordant.
\item $d_r(K,K')=0$ if and only if $K$ and $K'$ are isotopic.
\item $d_r(K,K')=d_r(K',K)$.
\item $d_r(K,K'')\le \max\{ d_r(K,K'), d_r(K',K'')\}.$
\end{enumerate}

Furthermore, if $K$ is ribbon, then $d_r(K,U)\le \Fus(K)$.
Inspired by \cite{SarkarRibbon}*{Theorem~1.1}, we prove the following,
which is equivalent to the statement in Section~\ref{sec:ribbon-distance-intro}:

\begin{customthm}{\ref{thm:ribbon-distance}}
Suppose $K$ and $K'$ are concordant knots, and let $d=d_r(K,K')$ denote their ribbon distance. Then
\[
v^d \HFK^-(K)\iso v^d\HFK^-(K').
\]
\end{customthm}

\begin{proof}
Since ribbon distance is defined by taking a sequence of ribbon concordances,
it is sufficient to show that if there is a single ribbon concordance $C$ from $K$ to $K'$ with $n$ saddles, then
\begin{equation}
v^n \HFK^-(K)\iso v^n \HFK^-(K'). \label{eq:main-claim-ribbon-distance}
\end{equation}
To prove Equation~\eqref{eq:main-claim-ribbon-distance}, we exhibit maps
\[
F\colon v^n \HFK^-(K)\to v^n \HFK^-(K')\qquad \text{and}\qquad G\colon v^n \HFK^-(K')\to v^n \HFK^-(K),
\]
and show that
\[
F\circ G=\id_{v^n\HFK^-(K')}\qquad \text{and}\qquad G\circ F=\id_{v^n\HFK^-(K)}.
\]

Let $\bar{C}$ be the concordance from $K'$ to $K$ obtained by horizontally
mirroring $C$. We write $\cC$ for a decoration of $C$ with two parallel
dividing arcs, and $\bar{\cC}$ for the mirrored decoration on $\bar{C}$.
Let
\[
F_0\colon \HFK^-(K)\to \HFK^-(K') \qquad \text{and} \qquad G_0\colon \HFK^-(K') \to \HFK^-(K)
\]
denote the maps induced by $\cC$ and $\bar{\cC}$, respectively.
Since $F_0$ and $G_0$ are $\bF_2[v]$-equivariant, we define $F$ and $G$ to be
the restrictions of $F_0$ and $G_0$ to the images of $v^n$.
A first application of Proposition~\ref{prop:main-thm-maps} implies that
$G_0\circ F_0 = \id_{\HFK^-(K)}$, so we easily obtain
$G \circ F = \id_{v^n\HFK^-(K)}$.

Next, Proposition~\ref{prop:main-thm-maps} implies that
\[
v^n\cdot (F_0\circ G_0)=v^n\cdot \id_{\HFK^-(K')}.
\]
Hence $(F_0 \circ G_0)(v^n\cdot x) = v^n\cdot x$; i.e.,
$F \circ G = \id_{v^n\HFK^-(K')}$, completing the proof.
\end{proof}

\section{Generalized torsion orders}\label{sec:generalized-torsion}

In this section, we describe some algebraic generalizations of the torsion
order of  $\HFK^-(K)$. We consider the full knot Floer complex $\cCFK^-(K)$,
which is a free and finitely generated chain complex over the two-variable
polynomial ring $\bF_2[u,v]$.   As an  $\bF_2[u,v]$-module, $\cCFK^-(K)$ is
freely generated by intersection points $\xs\in \bT_{\a}\cap \bT_{\b}$.
Analogous to Equation~\eqref{eq:differential-minus}, the full differential is
given by
\[
\d \xs=\sum_{\ys\in \bT_{\a}\cap \bT_{\b}} \sum_{\substack{\phi\in \pi_2(\xs,\ys)\\ \mu(\phi)=1}} \# (\cM(\phi)/\R) u^{n_w(\phi)} v^{n_z(\phi)} \cdot \ys.
\]
Write $\cHFK^-(K)$ for the homology of $\cCFK^-(K)$. Note that
\begin{equation}
\CFK^-(K)\iso \cCFK^-(K)\otimes_{\bF_2[u,v]} \bF_2[u,v]/(u).\label{eq:CFK=cCFKtensor}
\end{equation}

It is not hard to see that the torsion submodule of $\cHFK^-(K)$ is finitely
generated over $\bF_2$. Furthermore, both $u^N$ and $v^N$ annihilate the
torsion submodule of $\cHFK^-(K)$ for large $N$.
It is important to note that $\bF_2[u,v]$ is not a PID, so a finitely generated module may be
torsion-free but not free (see Figure~\ref{fig:6} for an example).

The quantities $\Ord_u(\cHFK^-(K))$ and $\Ord_v(\cHFK^-(K))$ are both well defined, non-negative integers.
The conjugation symmetry of knot Floer homology implies
\[
\Ord_u(\cHFK^-(K))=\Ord_v(\cHFK^-(K)).
\]
To distinguish between the torsion orders of $\HFK^-(K)$ and $\cHFK^-(K)$, we will write
\[
\cOrd_v(K):=\Ord_v(\cHFK^-(K)).
\]

\begin{define}\label{def:generalized-torsion-order}
We define the following additional notions of torsion order:
\begin{enumerate}
\item The \emph{2-variable torsion order} $\cOrd_{u,v}(K)$ is the smallest $N \in \N$ such that
\[
u^iv^j \cdot \Tor (\cHFK^-(K))=\{0\}
\]
whenever $i$, $j \ge 0$ and $i + j = N$.

\item The \emph{homomorphism torsion order} $\cOrd_{u,v}^{\Hom}(K)$ is  the minimal $N \in \N$
such that, whenever $i$, $j \ge 0$ satisfy $i + j = N$, there are homogeneously graded maps
\[
f\colon \cHFK^-(K) \to \bF_2[u,v]\qquad \text{and} \qquad g\colon \bF_2[u,v]\to  \cHFK^-(K)
\]
such that $f \circ g$ and $g \circ f$ are both multiplication by $u^i v^j$.
\item The \emph{chain torsion order} $\cOrd_{u,v}^{\Chain}(K)$ is the minimal $N\in \N$
such that, whenever $i$, $j \ge 0$ satisfy $i + j = N$, there are homogeneously graded chain maps
\[
f \colon \cCFK^-(K)\to \bF_2[u,v] \qquad \text{and} \qquad g \colon \bF_2[u,v] \to \cCFK^-(K)
\]
such that $f \circ g$ and $g \circ f$ are chain homotopic to multiplication by $u^iv^j$.
\end{enumerate}
\end{define}

The homomorphism and chain torsion orders  are both modifications
of the invariant $u'(K)$ described by Alishahi and Eftekhary~\cite{AETangles, AEUnknotting}.

We also clarify the meaning of a \emph{homogeneously graded} map in
Definition~\ref{def:generalized-torsion-order}: If $V$ and $W$ are graded
vector spaces, a homogeneously graded map $f\colon V\to W$ is one which
changes grading by a fixed degree. This coincides with the notion obtained by
viewing $\Hom(V,W)$ itself as a graded vector space.

A straightforward algebraic argument shows that
\begin{equation}
\cOrd_u(K)\le \cOrd_{u,v}(K)\le \cOrd_{u,v}^{\Hom}(K)\le \cOrd_{u,v}^{\Chain}(K). \label{eq:inequalities-of-torsion}
\end{equation}
The chain torsion order also has the advantage that it respects duality:
\begin{equation}
\cOrd_{u,v}^{\Chain}(K)= \cOrd_{u,v}^{\Chain}(\bar{K}). \label{eq:mirrow-symmetry}
\end{equation}
The analog of equation~\eqref{eq:mirrow-symmetry} fails for the 2-variable torsion order $\cOrd_{u,v}(K)$: In Lemma~\ref{lem:generalized-torsion-order-computation}, we show that $\cOrd_{u,v}(T_{p,q})\neq\cOrd_{u,v}(\bar{T}_{p,q})$.

\subsection{A generalized doubling relation}

We now prove the following generalization of Proposition~\ref{prop:main-thm-maps}.

\begin{prop}\label{prop:generalized-doubling}
Suppose that $S$ is a connected link cobordism from $K_0$ to $K_1$ with
$M$ local maxima. Suppose that $s$, $t$, $p$, and $q$ are
non-negative integers such that
\[
s+t = M,\qquad  p+q = M + 2g(S), \qquad s \le p, \quad \text{and} \quad t\le q.
\]
Then there is a decoration $\cF$ of $S$, as well as a decoration $\cF'$
of the mirrored cobordism $\bar{S}$, such that
\[
u^{s}v^{t} \cdot F_{\cF'} \circ F_{\cF} \simeq u^{p}v^{q} \cdot \id_{\cCFK^-(K_0)}.
\]
\end{prop}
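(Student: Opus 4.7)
The plan is to mimic the proof of Proposition~\ref{prop:main-thm-maps} while allowing the cancellation tubes to lie either in the $\ws$- or in the $\zs$-subregion. The first ingredient is the $\ws$-analogue of Lemma~\ref{lem:stabilization}: attaching a tube whose two feet both lie in $S_{\ws}$ multiplies the cobordism map by $u$ rather than by $v$. This follows from the $(u,\ws)\leftrightarrow(v,\zs)$ symmetry of the link Floer TQFT, and is essentially the mirror of Lemma~\ref{lem:stabilization}.

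Reorder the critical points of $S$ into the four-step movie used in the proof of Proposition~\ref{prop:main-thm-maps}: $m$ births, $m$ fusion saddles, $b-m$ additional saddles, and $M$ deaths. Because $S$ is connected, one can arrange this so that each of the $M$ maximum unknots is created by a distinct one of the $b-m$ additional saddles. This partitions the bands $B_{m+1},\dots,B_b$ into $M$ \emph{splitting} bands (those raising the number of components) and $(b-m)-M = 2g(S)$ \emph{non-splitting} bands. I then choose decorations $\cF$ on $S$ and $\cF'$ on $\bar S$ so that $s$ of the splitting bands and $p-s$ of the non-splitting bands have their cores in $S_{\ws}$, while the remaining $t = M-s$ splitting bands and $q-t$ non-splitting bands have cores in $S_{\zs}$. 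The hypotheses $s\le p$ and $t\le q$ are exactly what is needed for $p-s$ and $q-t$ to be non-negative, and the identity $(p-s)+(q-t) = 2g(S)$ follows from $s+t=M$ and $p+q=M+2g(S)$.

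With these decorations in hand, the proof runs in parallel with Proposition~\ref{prop:main-thm-maps}. Define $\cG$ and $\cG'$ exactly as there. Cancelling the $M$ deaths of $\cF$ against the $M$ births of $\cF'$ attaches $M$ tubes to $\cF'\circ\cF$, one for each splitting band: $s$ of them lie in $S_{\ws}$ and $t$ in $S_{\zs}$, so the two stabilization lemmas give
\[
F_{\cG} \simeq u^{s}v^{t}\cdot F_{\cF'\circ\cF}.
\]
Likewise, $\cG$ is obtained from $\cG'$ by attaching the $b-m$ tubes arising from the additional saddles of $\cF$ and their reverses in $\cF'$; by construction $p$ of these tubes lie in $S_{\ws}$ and $q$ in $S_{\zs}$, so the stabilization lemmas yield
\[
F_{\cG} \simeq u^{p}v^{q}\cdot F_{\cG'}.
\]
The proof of \cite{ZemRibbon}*{Theorem~1.7} adapts verbatim to the full complex $\cCFK^-(K_0)$ over $\bF_2[u,v]$, giving $F_{\cG'} \simeq \id_{\cCFK^-(K_0)}$. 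Combining these three identities yields the stated chain homotopy.

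The main obstacle is the decoration step: one must realize the prescribed band-by-band region assignment by a genuine pair of decorations $(\cF, \cF')$. Concretely, this means drawing dividing arcs $\cA$ on $S$ (and on $\bar S$) so that $S\setminus\cA$ splits into subsurfaces with the correct basepoint containments on $K_0$ and $K_1$ and with each band's core lying in the prescribed subregion; one must also check that the gluing of $\cF$ and $\cF'$ along $K_1$ is consistent. Once this geometric bookkeeping is in place, the remainder of the argument is a direct application of the two stabilization lemmas and Zemke's sphere-tubing identity.
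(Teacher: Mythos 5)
Your overall strategy is the same as the paper's (double the cobordism, cancel deaths against births, remove the remaining tubes via the stabilization formula, and quote the sphere-tubing identity from \cite{ZemRibbon}), and your $u$-analogue of Lemma~\ref{lem:stabilization} is indeed part of \cite{JZStabilization}*{Lemma~5.3}. But the step you explicitly defer --- ``realize the prescribed band-by-band region assignment by a genuine pair of decorations $(\cF,\cF')$'' --- is precisely where the content of the proposition lies, and as stated your assignment is not realizable in general. A decoration of the type used here has dividing arcs meeting $K_0$ in only two points, which cut $K_0$ into a single $\ws$-arc and a single $\zs$-arc. The paper first normalizes the $2g(S)$ non-splitting bands into $g(S)$ pairs with \emph{linked} feet along one subarc $a_1\subset K_0$; a linked pair cannot be separated by the dividing set (the two separating points would both have to lie inside $a_1$, leaving a complementary arc of $K_0$ containing neither basepoint), so the genus bands can only be assigned to the $\ws$- and $\zs$-subregions in whole pairs, i.e.\ in even numbers. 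When $p-s$ and $q-t$ are odd, your prescription of ``$p-s$ non-splitting bands in $S_{\ws}$'' is therefore impossible, and your identity $F_{\cG}\simeq u^pv^q\cdot F_{\cG'}$ --- which presumes every doubled band-tube lies wholly in one subregion --- does not follow from the stabilization lemma alone.

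The paper handles exactly this parity problem: it decorates $\cF$ with $2\lceil(p-s)/2\rceil$ genus bands in the $\ws$-region and $\cF'$ with $2\lfloor(p-s)/2\rfloor$ (so the two decorations do not match along $K_1$ and must be bridged by an auxiliary decoration $\tilde{\cA}$ on $[0,1]\times K_1$), and in the odd case two of the resulting tubes straddle both subregions. Removing those two tubes is not an application of Lemma~\ref{lem:stabilization}; it requires the generalized $(3,0)$-stabilization formula of \cite{JZStabilization}*{Lemma~5.3}, applied with the disks $\hat D$, $D_1$, $D_2$, which shows the straddling pair contributes a factor of $u^1v^1$. Without carrying out this construction (or some substitute for it), your proof has a genuine gap at the decisive step; the rest of your outline is fine and matches the paper.
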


\begin{proof}
The proof follows from the same strategy as the proof of
Proposition~\ref{prop:main-thm-maps}, with some extra care taken regarding
the dividing set.
By a sequence of band slides, we can ensure that there are disjoint and
connected subarcs $a_1$, $a_2 \subset K_0$, with respect to which $S$ has
the following movie:
\begin{enumerate}[leftmargin=2cm, label=($M$-\arabic*), ref=$M$-\arabic*]
\item\label{movie:1'} $m$ births, each adding an unknot.
\item\label{movie:2'} $m$ fusion bands, each connecting an unknot to $K_0$.
\item\label{movie:3'} $2g(S)$ bands, with attaching feet in $a_1$.
Furthermore, these bands come in pairs which have linked attaching feet along $K_0$.
\item\label{movie:4'} $M$ fission bands, with ends in $a_2$. Both feet of each band are adjacent on $K_0$.
\item\label{movie:death} $M$ deaths, each removing an unknot.
\item\label{move:5'} An isotopy, moving the band surgered copy of $K_0$ to $K_1$.
\end{enumerate}

\begin{figure}[ht!]
\centering
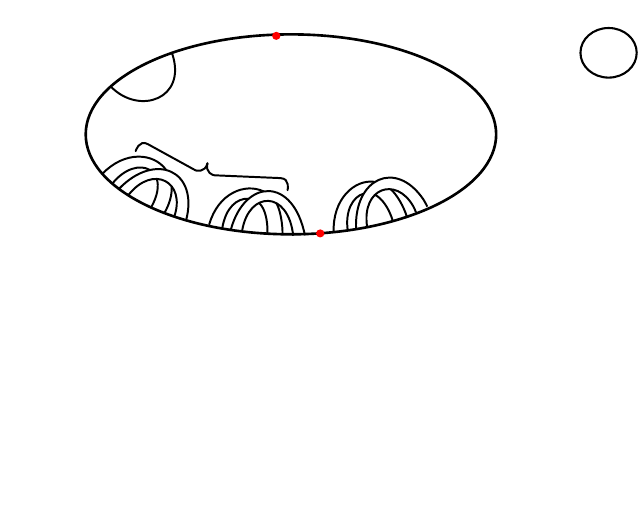
\caption{The configuration of the bands of $S$, attached to $K_0$.
The dividing sets $\cA$ on $S$ and $\cA_0'$ on $\bar{S}$ are given by the red dots
(extended horizontally) in the top and bottom figure, giving rise to the decorated
surfaces $\cF$ and $\cF_0'$, respectively.}\label{fig:8}
\end{figure}

Since
\[
(p-s) + (q-t) = 2g(S),
\]
we conclude that $p-s$ and $q-t$ have the same parity. Consequently,
\[
\left \lceil \frac{p-s}{2}\right\rceil+\left \lfloor\frac{q-t}{2}\right\rfloor=
\left \lfloor \frac{p-s}{2}\right\rfloor+\left \lceil \frac{q-t}{2}\right\rceil=g(S).
\]

Construct a dividing set $\cA$ on $S$ with 2 arcs such that the $\ws$ and $\zs$-subregions are connected, and
\begin{enumerate}
\item $s$ of the fission bands from step~\eqref{movie:4'} occur in the $\ws$-subregion,
and the other $t$ bands occur in the $\zs$-subregion.
\item $2\lceil (p-s)/2\rceil$ linked bands (from  the $\lceil (p-s)/2\rceil $ pairs of linked bands)
from step~\eqref{movie:3'} occur in the $\ws$-subregion,
while the other $2\lfloor(q-t)/2\rfloor$ occur in the $\zs$-subregion.
\end{enumerate}

We now construct a decoration on the turned around cobordism $\bar{S}$.
We first construct a decoration $\cA_0'$, which does not quite match up with
the decoration on $S$ along $K_1$, and gives rise to the decorated surface $\cF_0' = (\bar{S}, \cA_0')$.
We construct $\cA_0'$ such that the following hold:
\begin{enumerate}
\item $s$ of the fission bands from step~\eqref{movie:4'} occur in the $\ws$-subregion,
and the other $t$ bands occur in the $\zs$-subregion.
\item $2\lfloor (p-s)/2\rfloor$ linked bands (from  the $\lfloor (p-s)/2\rfloor$ pairs of linked bands)
from step~\eqref{movie:3'} occur in the $\ws$-subregion, while the other $2\lceil(q-t)/2\rceil$ occur in the $\zs$-subregion.
\end{enumerate}

The dividing arc of $\cA_0'$ which separates the fission bands can always be
chosen to match with a dividing arc of $\cA$ (this corresponds to the top red
dot of Figure~\ref{fig:8}). Our description of the other two arcs do not
match up along $K_1$. Nonetheless, we can construct a decoration $\tilde{\cA}$
on $[0,1] \times K_1$, consisting of two arcs that do not cross
$[0,1] \times \{w\}$ or $[0,1] \times \{z\}$, which connect the endpoints of the dividing
sets of $\cA_0'$ and $\cA$. We define the decoration  on $\cF'$ to be the
union of $\cA_0'$ and $\tilde{\cA}$.

We delete the deaths of step~\eqref{movie:death} from $\cF$, and also delete the
corresponding births from $\cF'$. We glue the resulting boundary components together in pairs via horizontal cylinders. The resulting surface is obtained by adding
$s$ tubes to the $\ws$-subregion, and $t$ tubes to the $\zs$-subregion. Let $\cG$
denote the resulting decorated surface. A generalization of
Lemma~\ref{lem:stabilization} implies that adding a tube to the
$\zs$-subregion changes the link cobordism map by multiplication by $v$, and
adding a tube to the $\ws$-subregion changes the map by multiplication by
$u$; see \cite{JZStabilization}*{Lemma~5.3} for a proof. Consequently,
\begin{equation}
F_{\cG}=u^{s}v^{t}\cdot F_{\cF'}\circ F_{\cF}.\label{eq:G=usvtF'F}
\end{equation}

The surface $\cG$ has $p+q$ distinguished tubes (one tube for each  band attached to $K_0$ to form $\cF$).
 Let $\cG_0$ denote the decorated link cobordism obtained by removing these tubes from $\cG$,
and decorating the resulting surface with a horizontal pair of dividing arcs.

We claim that
\begin{equation}
F_{\cG}=u^p v^q\cdot F_{\cG_0}.\label{eq:G=upvqG0}
\end{equation}
First, note that, by construction, $s+2\lfloor(p-s)/2\rfloor$ of the tubes
 occur fully in the $\ws$-subregion, and $t+2\lfloor (q-t)/2\rfloor$
  tubes occur fully in the $\zs$-subregion. If $p-s$ and $q-t$ are both even, then
Equation~\eqref{eq:G=upvqG0} follows from Lemma~\ref{lem:stabilization}. If $p-s$ and
$q-t$ are both odd, then there are exactly two tubes which are not fully in the $\ws$-subregion,
or in the $\zs$-subregion. Using Lemma~\ref{lem:stabilization} to remove the
$p+q-2$ tubes which are fully in the $\ws$-subregion or the $\zs$-subregion, it
remains to show Equation~\eqref{eq:G=upvqG0} when $p=q=1$ and $s=t=0$. The dividing set
of $\cG$ is shown in Figure~\ref{fig:9}.

\begin{figure}[ht!]
\centering
\begingroup%
  \makeatletter%
  \providecommand\color[2][]{%
    \errmessage{(Inkscape) Color is used for the text in Inkscape, but the package 'color.sty' is not loaded}%
    \renewcommand\color[2][]{}%
  }%
  \providecommand\transparent[1]{%
    \errmessage{(Inkscape) Transparency is used (non-zero) for the text in Inkscape, but the package 'transparent.sty' is not loaded}%
    \renewcommand\transparent[1]{}%
  }%
  \providecommand\rotatebox[2]{#2}%
  \newcommand*\fsize{\dimexpr\f@size pt\relax}%
  \newcommand*\lineheight[1]{\fontsize{\fsize}{#1\fsize}\selectfont}%
  \ifx\svgwidth\undefined%
    \setlength{\unitlength}{331.06099371bp}%
    \ifx\svgscale\undefined%
      \relax%
    \else%
      \setlength{\unitlength}{\unitlength * \real{\svgscale}}%
    \fi%
  \else%
    \setlength{\unitlength}{\svgwidth}%
  \fi%
  \global\let\svgwidth\undefined%
  \global\let\svgscale\undefined%
  \makeatother%
  \begin{picture}(1,0.36119679)%
    \lineheight{1}%
    \setlength\tabcolsep{0pt}%
    \put(0,0){\includegraphics[width=\unitlength,page=1]{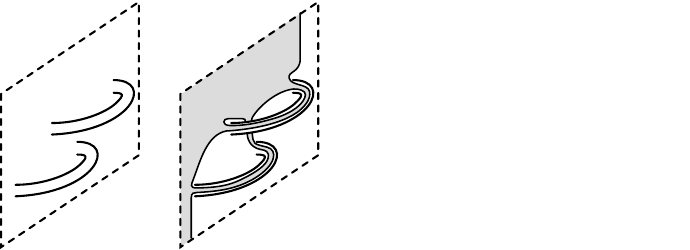}}%
    \put(0.32062657,0.01151029){\color[rgb]{0,0,0}\makebox(0,0)[lt]{\lineheight{1.25}\smash{\begin{tabular}[t]{l}$\cG$\end{tabular}}}}%
    \put(0,0){\includegraphics[width=\unitlength,page=2]{fig9.pdf}}%
    \put(0.58064604,0.01151016){\color[rgb]{0,0,0}\makebox(0,0)[lt]{\lineheight{1.25}\smash{\begin{tabular}[t]{l}$\cG_0$\end{tabular}}}}%
    \put(0,0){\includegraphics[width=\unitlength,page=3]{fig9.pdf}}%
    \put(0.93562365,0.05565665){\color[rgb]{0,0,0}\makebox(0,0)[lt]{\lineheight{1.25}\smash{\begin{tabular}[t]{l}$\hat{D}$\end{tabular}}}}%
    \put(0,0){\includegraphics[width=\unitlength,page=4]{fig9.pdf}}%
  \end{picture}%
\endgroup%

\caption{Far-left: the underlying surface of $\cG$.
Middle-left: the decoration on $\cG$. Middle-right: the destabilized $\cG_0$. Far-right: the disk $\hat{D}\subset \cG_0$.}\label{fig:9}
\end{figure}

Let $\hat{D}\subset \cG_0$ denote a disk which contains the 4 feet of the two tubes,
and also intersects the dividing set of $\cG_0$ in a single arc.
We may pick $\hat{D}$ to consist of the product
of a subarc of $K_0$, containing the 4 feet of the tubes, and a sub-interval of $[0,1]$.
Let $\gamma$ be a path in $\hat{D}$ connecting a foot of one tube to
a foot of the other tube. Viewing $K_1$ as the middle level
set of the doubled surface, we assume $\g$ is chosen to be a subarc of $K_1$,
which is disjoint from the bands. Let $h_1$ and $h_2$ be 3-dimensional 1-handles, corresponding
to the two tubes. Let $B\subset [0,1]\times S^3$ denote a regular neighborhood
of $h_1\cup h_2\cup \gamma$. We note $B$ is topologically a 4-ball.

The surfaces $\cG_0$ and $\cG$ intersect $\d B$ in a
3-component unlink. This can be seen as follows.
We let $B_0\subset S^3$ denote a 3-ball which
contains the two bands corresponding to
$h_1$ and $h_2$, as well as a sub-arc of $K_1$ corresponding to $\g$.
We may take $B$ to be $[a,b]\times B_0$, for
some subinterval $[a,b]\subset [0,1]$, where the two bands
and their mirrors are attached in the time interval $[a,b]$.
The boundary of $B$ consists of the union of $\{a,b\}\times B_0$
and $[a,b]\times \d B_0$. By construction,
 $\cG\cap \d B=\cG_0\cap \d B$. Furthermore, we claim that $\cG\cap \d B$
 is a 3-component unlink.
To see this, we note $\cG\cap \d B$ consists of the union
of $\{a,b\}\times (B_0\cap K)$ and $[a,b]\times \d (B_0 \cap K)$.
Since $B_0\cap K$ is a 3-component, boundary parallel tangle,
it follows that the link $\cG\cap \d B$ is a 3-component unlink.

In the language of
\cite{JZStabilization}*{Definition~2.8}, the underlying surface of $\cG$ is obtained by a
$(3,0)$-stabilization of $\cG_0$.

The dividing set of $\cG_0$ intersects $\hat{D}$ in a single arc.
The dividing set of $\cG$ intersects the union of $\hat{D} \cap \cG$ and
the two tubes in a single arc; see the second frame of Figure~\ref{fig:9}.
 By \cite{JZStabilization}*{Lemma~5.3}, since the genera of the $\ws$- and $\zs$-subregions
are both one larger in $\cG$ than in $\cG_0$, it follows that
 \[
F_{\cG}=u^1v^1\cdot F_{\cG_0},
 \]
completing the proof of Equation~\eqref{eq:G=upvqG0} in the final case.

Note that $\cG_0$ is obtained by tubing in $m$ 2-spheres
into the identity concordance $[0,1]\times K_0$,
decorated with a horizontal pair of arcs. The proof of \cite{ZemRibbon}*{Theorem~1.7}
implies that tubing in 2-spheres in this manner does not change the cobordism maps,
so
\begin{equation}
F_{\cG_0}\simeq \id_{\cCFK^-(K_0)}.\label{eq:FG_0simeqid}
\end{equation}
Combining Equations~\eqref{eq:G=usvtF'F}, \eqref{eq:G=upvqG0}, and~\eqref{eq:FG_0simeqid}, we obtain
\[
u^tv^s \cdot F_{\cF'} \circ F_{\cF} \simeq u^pv^q \cdot \id_{\cCFK^-(K_0)}.
\]
completing the proof.
\end{proof}

\subsection{Generalized torsions and knot cobordisms}
We now state a generalization of Theorem~\ref{thm:cobordism-bound} involving the chain torsion order:

\begin{prop}\label{prop:bound-generalized-torsion}
Suppose there is a connected knot cobordism $S$ from $K_0$ to $K_1$ with $M$ local maxima. Then
\[
\cOrd_{u,v}^{\Chain}(K_0) \le \max\{M, \cOrd_{u,v}^{\Chain}(K_1)\} + 2g(S).
\]
\end{prop}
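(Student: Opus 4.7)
The plan is to adapt the proof of Theorem~\ref{thm:cobordism-bound} to the chain level, replacing Proposition~\ref{prop:main-thm-maps} by its generalization Proposition~\ref{prop:generalized-doubling}, and using the definition of the chain torsion order to factor multiplication by $u^a v^b \cdot \id_{\cCFK^-(K_1)}$ through $\bF_2[u,v]$. Write $N := \cOrd_{u,v}^{\Chain}(K_1)$ and $T := \max\{M,N\}+2g(S)$, and fix $(P,Q)$ with $P+Q=T$; the goal is to produce homogeneously graded chain maps $F_0 \colon \cCFK^-(K_0)\to \bF_2[u,v]$ and $G_0 \colon \bF_2[u,v] \to \cCFK^-(K_0)$ whose two compositions are each chain homotopic to multiplication by $u^P v^Q$.

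First I would choose integer parameters $s,t,p,q,a,b\ge 0$ satisfying $s+t=M$, $p+q=M+2g(S)$, $s\le p$, $t\le q$, $a+b=\max\{M,N\}$, $a\ge s$, $b\ge t$, and $(a-s+p,\, b-t+q)=(P,Q)$. Splitting on whether $N\ge M$ or $N<M$, a routine check shows such parameters exist for every admissible $(P,Q)$; in the case $N<M$ one uses $\cOrd_{u,v}^{\Chain}(K_1)=N\le M$ to realize $u^a v^b\cdot \id$ for arbitrary $(a,b)$ with $a+b=M$, by premultiplying the given chain-torsion-order maps by suitable powers of $u$ and $v$. Applying Proposition~\ref{prop:generalized-doubling} to $S$ with these $s,t,p,q$ yields decorations $\cF,\cF'$ of $S$ and $\bar{S}$ satisfying
\[
u^s v^t\cdot F_{\cF'}\circ F_{\cF}\simeq u^p v^q\cdot \id_{\cCFK^-(K_0)},
\]
while the chain torsion order of $K_1$ yields $f\colon \cCFK^-(K_1)\to\bF_2[u,v]$ and $g\colon \bF_2[u,v]\to \cCFK^-(K_1)$ with both $f\circ g$ and $g\circ f$ chain homotopic to multiplication by $u^a v^b$.

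Setting $F_0 := f\circ F_{\cF}$ and $G_0 := F_{\cF'}\circ g$, the easier direction is
\[
G_0\circ F_0 = F_{\cF'}\circ (g\circ f)\circ F_{\cF}\simeq u^a v^b\cdot F_{\cF'}\circ F_{\cF}\simeq u^{a-s+p}v^{b-t+q}\cdot \id_{\cCFK^-(K_0)}=u^P v^Q\cdot \id_{\cCFK^-(K_0)},
\]
where the second chain homotopy follows from Proposition~\ref{prop:generalized-doubling} after multiplying by $u^{a-s}v^{b-t}$, which has non-negative exponents by our parameter choice.

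The main obstacle will be the other composition $F_0\circ G_0\colon \bF_2[u,v]\to\bF_2[u,v]$, since Proposition~\ref{prop:generalized-doubling} provides no analogue for $F_{\cF}\circ F_{\cF'}$ (such a relation would require controlling local minima of $S$). My plan is to exploit that $\bF_2[u,v]$ has zero differential, so any chain map $\bF_2[u,v]\to \bF_2[u,v]$ is automatically an $\bF_2[u,v]$-module map and hence equals $c\cdot \id$ for some $c=F_0\circ G_0(1)\in \bF_2[u,v]$. Writing $F_0\circ G_0\circ F_0$ in two ways gives $c\cdot F_0 = (F_0\circ G_0)\circ F_0 = F_0\circ (G_0\circ F_0)\simeq u^P v^Q\cdot F_0$, so $(c - u^P v^Q)\cdot F_0\simeq 0$, and hence $(c-u^P v^Q)\cdot (F_0)_* = 0$ on $\cHFK^-(K_0)$. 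Since $\cHFK^-(K_0)$ contains non-torsion elements (its localization in $u$ and $v$ is nontrivial), $(G_0\circ F_0)_* = u^P v^Q\cdot \id$ is nonzero on such an element $\alpha$, forcing $(F_0)_*(\alpha)$ to be a nonzero element of $\bF_2[u,v]$; the integral domain property of $\bF_2[u,v]$ then gives $c = u^P v^Q$, completing the proof.
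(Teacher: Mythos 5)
Your proposal is correct and follows essentially the same route as the paper: choose splitting parameters, apply Proposition~\ref{prop:generalized-doubling} to get $u^s v^t\cdot F_{\cF'}\circ F_{\cF}\simeq u^p v^q\cdot \id$, and compose the cobordism maps with the chain-torsion-order maps for $K_1$ to produce the required maps for $K_0$; your $s,t,p,q,a,b$ bookkeeping is an equivalent repackaging of the paper's choice of $s,t,p,q$ together with $l_1=i-p$, $l_2=j-q$. The only divergence is the composition $F_0\circ G_0$, where the paper simply notes that $\Hom_{\bF_2[u,v]}(\bF_2[u,v],\bF_2[u,v])$ contains exactly one non-zero map in each grading, while your integral-domain argument using a class of $\cHFK^-(K_0)$ not annihilated by any monomial is a valid (indeed slightly more detailed) substitute that also rules out the zero map explicitly.
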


\begin{proof}
Suppose that $i$, $j$ are non-negative integers such that
\begin{equation}\label{eq:main-assumption-i-j}
\max\{M, \cOrd_{u,v}^{\Chain}(K_1)\} + 2g(S) \le i + j.
\end{equation}
We claim that we can pick non-negative integers $s$, $t$, $p$, and $q$ such that
\begin{equation}
\begin{split}
&s \le p \le i,\\
&t \le q \le j,\\
&s + t = M,\\
&p + q = M + 2g(S).
\end{split}
\label{eq:pqst-assumptions}
\end{equation}
Indeed, start by picking $s$ and $t$ such that $0 \le s\le i$, $0 \le t \le j$,
and $s + t = M$, which can be done since $M \le i+j$. Next, pick $p$ and $q$
such that $s \le p \le i$, $t\le q\le j$, and $p + q = M + 2g(S)$, which is possible since
$s$ and $t$ are already chosen, and $M + 2g(S) \le i+j$.

Consider the non-negative integers
\[
l_1:=i-p\qquad \text{and} \qquad l_2:=j-q.
\]
Equations~\eqref{eq:main-assumption-i-j} and~\eqref{eq:pqst-assumptions} imply that
\begin{equation}
\begin{split}
s+t+l_1+l_2&=s+t+i+j-p-q\\
&=i+j - 2g(S)\\
&\ge \cOrd_{u,v}^{\Chain}(K_1).
\end{split}
\label{eq:stl1l2>OK1}
\end{equation}

The generalized doubling relation from Proposition~\ref{prop:generalized-doubling}
implies that there are decorations $\cF$ of $S$ and $\cF'$ of the mirror $\bar{S}$, such that
\[
u^s v^t \cdot F_{\cF'} \circ F_{\cF}\simeq u^p v^q \cdot \id_{\cCFK^-(K_0)}.
\]
Multiplying by $u^{l_1}v^{l_2}$, we obtain
\begin{equation}\label{eq:f'g'=uiviid}
F_{\cF'}\circ(u^{s+l_1}v^{t+l_2}\cdot  F_{\cF})\simeq u^{i}v^j \cdot \id_{\cCFK^-(K_0)}.
\end{equation}
From equation~\eqref{eq:stl1l2>OK1}, we see that there are graded, $\bF_2[u,v]$-equivariant chain maps
\[
f\colon \cCFK^-(K_1)\to \bF_2[u,v] \qquad \text{and}\qquad g\colon \bF_2[u,v]\to \cCFK^-(K_1),
\]
such that $f\circ g$ and $g\circ f$ are both chain homotopic to multiplication by $u^{s+l_1}v^{t+l_2}$.

Set $g'=F_{\cF'} \circ g$ and $f' = f\circ F_{\cF}$.
Equation~\eqref{eq:f'g'=uiviid} implies that $g' \circ f'$ is chain homotopic
to $u^i v^j \cdot \id_{\cCFK^-(K_0)}$. The fact that $f' \circ g' \simeq
u^i v^j \cdot \id_{\bF_2[u,v]}$ follows since there is exactly one non-zero map
in $\Hom_{\bF_2[u,v]}(\bF_2[u,v],\bF_2[u,v])$ in each grading.
\end{proof}

\subsection{Topological bounds from the generalized torsion orders}

Many of the topological bounds we proved for $\Ord_v(K)$ also hold for the more general torsion orders:

\begin{prop}\label{prop:chain-torsion-applications}
Suppose $K$ is a knot in $S^3$.
\begin{enumerate}
\item Then $\cOrd_{u,v}^{\Chain}(K)\le ul_b(K)$, where $ul_b(K)$ is the band-unlinking number.
\item If $K$ is a ribbon knot, then $\cOrd_{u,v}^{\Chain}(K)\le \Fus(K)$.
\end{enumerate}
\end{prop}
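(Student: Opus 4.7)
My plan is to mimic the proofs of Corollaries~\ref{thm:band-unknotting-number} and~\ref{cor:fusion-number-bound}, but invoking the generalized cobordism bound Proposition~\ref{prop:bound-generalized-torsion} in place of Theorem~\ref{thm:cobordism-bound}. The key observation that makes this work is that $\cCFK^-(U)\simeq \bF_2[u,v]$, so the identity map witnesses $\cOrd_{u,v}^{\Chain}(U)=0$; this is what allows an unknot endpoint to contribute nothing to the right-hand side of the bound.

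For part (1), I would set $b=ul_b(K)$ and, by definition of the band-unlinking number, fix $b$ oriented bands whose attachment to $K$ produces an unlink, say with $M$ components. I would then cap off $M-1$ of these components with disks to build a connected, oriented cobordism $S$ from $K$ to the unknot $U$ which, when read from $K$ to $U$, has $0$ local minima, $b$ saddles, and $M-1$ local maxima. The Euler characteristic bookkeeping gives $2g(S)=-\chi(S)=b-(M-1)-0=b-M+1$. Applying Proposition~\ref{prop:bound-generalized-torsion} with $K_0=K$ and $K_1=U$, and using $\cOrd_{u,v}^{\Chain}(U)=0$, I obtain
\[
\cOrd_{u,v}^{\Chain}(K)\le \max\{M-1,\,0\}+2g(S)=(M-1)+(b-M+1)=b=ul_b(K),
\]
which is exactly the desired inequality.

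For part (2), I would just observe (as in the proof of Corollary~\ref{cor:fusion-number-bound}) that if $B_1,\dots,B_{\Fus(K)}$ are the bands of a ribbon disk for $K$, then cutting $K$ along these bands produces an unlink, and hence $ul_b(K)\le \Fus(K)$. The claim then follows immediately from part (1).

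I do not expect any real obstacle: the only ingredient beyond what is already written down is Proposition~\ref{prop:bound-generalized-torsion}, which has just been established, and the trivial computation $\cOrd_{u,v}^{\Chain}(U)=0$. The proof is essentially a direct transcription of the $\HFK^-$ proofs with $\Ord_v$ replaced by $\cOrd_{u,v}^{\Chain}$ throughout. One might worry that, unlike $\Ord_v$, the chain torsion order is not obviously well-behaved under connected sums (as the authors note in their remarks preceding the proposition), but neither part (1) nor part (2) uses such a connected sum formula, so this is not an issue here.
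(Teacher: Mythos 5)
Your proposal is correct and is essentially identical to the paper's argument: the paper proves this proposition by rerunning the proofs of Corollaries~\ref{thm:band-unknotting-number} and~\ref{cor:fusion-number-bound} with Proposition~\ref{prop:bound-generalized-torsion} in place of Theorem~\ref{thm:cobordism-bound}, exactly as you do. Your explicit check that $\cOrd_{u,v}^{\Chain}(U)=0$ (since $\cCFK^-(U)\simeq \bF_2[u,v]$) is the only ingredient the paper leaves implicit, and it is correct.
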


\begin{proof}
The proofs are the same as the proofs of Corollary~\ref{thm:band-unknotting-number} and~\ref{cor:fusion-number-bound},
using Proposition~\ref{prop:bound-generalized-torsion} instead of Theorem~\ref{thm:cobordism-bound}.
\end{proof}

The most notable result which does not hold for $\cOrd_{u,v}^{\Chain}(K)$ is
our bound on the bridge index, Corollary~\ref{cor:bridge-number}. The proof
of Corollary~\ref{cor:bridge-number} used the fact that $\bF_2[u]$ is a PID,
which is not true for the ring $\bF_2[u,v]$.
Proposition~\ref{prop:chain-torsion-applications} instead implies that
\[
\cOrd_{u,v}^{\Chain}(K\# \bar{K})\le \br(K)-1.
\]
In the subsequent Section~\ref{sec:computations-generalized-torsion}, we will
compute several examples to illustrate the behavior of generalized torsion
orders.

\subsection{Computations of generalized torsion orders}
\label{sec:computations-generalized-torsion}

\begin{lem}\label{lem:generalized-torsion-order-computation} Suppose $p$ and $q$ are coprime and non-negative.
\begin{enumerate}
\item \label{lem-tor-comp:1}If $K$ is a positive L-space knot (e.g., $K=T_{p,q}$), then $\cHFK^-(K)$ is torsion-free
(i.e., $\cOrd_{u,v}(K)=0$), but is not free unless $K$ is the unknot.
\item \label{lem:tor-comp:2}$\cOrd_{u,v}^{\Hom}(T_{p,q})=(p-1)(q-1)/2$.
\item\label{lem:tor-comp:3} $\cOrd_{v}(\bar{T}_{p,q})=(p-1)(q-1)/2$.
\item\label{lem:tor-comp:4} $\cOrd_{v}(T_{p,q}\# \bar{T}_{p,q})=
\cOrd_{u,v}^{\Chain}(T_{p,q}\# \bar{T}_{p,q}) = \min \{p,q\}-1.$
\end{enumerate}
\end{lem}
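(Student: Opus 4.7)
The plan is to work throughout with the two-variable Ozsv\'ath--Szab\'o staircase model for $\cCFK^-(T_{p,q})$, lifting the single-variable version from the proof of Lemma~\ref{lem:torsion-order-L-space-knot}. This model has generators $x_0,\ldots,x_{2n}$ with $\partial x_{2k}=0$ and $\partial x_{2k+1}=u^{d_{2k+1}}x_{2k}+v^{d_{2k+2}}x_{2k+2}$, where the $d_i$ are the Alexander polynomial gaps and $G:=(p-1)(q-1)/2=\sum_\ell d_{2\ell}=\sum_\ell d_{2\ell-1}$ by Alexander symmetry.

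For part~(1), I would first verify that no nonzero combination $\sum c_k x_{2k+1}$ is a cycle: the endpoint equations $c_0 u^{d_1}=0$ and $c_{n-1}v^{d_{2n}}=0$ force $c_0=c_{n-1}=0$, and the interior equations $c_k u^{d_{2k+1}}+c_{k-1}v^{d_{2k}}=0$ propagate this inductively. Hence $\cCFK^-(T_{p,q})$ is a two-term free resolution of $\cHFK^-(T_{p,q})$. Localizing $R:=\bF_2[u,v]$ at its zero ideal collapses all staircase relations, leaving a one-dimensional $\bF_2(u,v)$-space, so $\cHFK^-(T_{p,q})$ is torsion-free. If $K\ne U$, then the rank-one module needs at least two generators with relations that become nontrivial after setting $u=v=0$, which forbids freeness.

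For part~(2), the upper bound comes from explicit map pairs: for $(i,j)=(G,0)$, set $g(1)=x_{2n}$ and $f(x_{2k})=u^{G-\sum_{m>k}d_{2m-1}}v^{\sum_{m>k}d_{2m}}$, which respects the staircase relations and has nonnegative exponents; symmetric constructions cover each $(i,j)$ with $i+j=G$. For the lower bound at $(G-1,0)$, homogeneity forces $g(1)$ to be one of the $x_{2k}$, and direct analysis of the staircase shows no such choice admits a polynomial-valued $f$: the $v$-clearing cost is positive for $k<n$, while $k=n$ would demand $G-1\ge G$ factors of $u$. For part~(3), I apply the duality $\cCFK^-(\bar K)\cong\Hom_R(\cCFK^-(K),R)$ to dualize the staircase, obtaining generators $y_0,\ldots,y_{2n}$ with $\partial y_{2k+1}=0$ and $\partial y_{2k}=v^{d_{2k}}y_{2k-1}+u^{d_{2k+1}}y_{2k+1}$. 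Iterating $v^{d_{2k}}y_{2k-1}=u^{d_{2k+1}}y_{2k+1}$ in $\cHFK^-(\bar T_{p,q})$ and applying the terminal relation $v^{d_{2n}}y_{2n-1}=0$ gives $v^G y_1=0$, while a parallel count shows $v^{G-1}y_1\ne 0$; hence $y_1$ realizes the maximum $v$-torsion order $G$.

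For part~(4), the upper bound follows from ribbon-ness: $T_{p,q}\#\bar T_{p,q}$ is ribbon with fusion number $\min\{p,q\}-1$ by Corollary~\ref{cor:torsion-order-Tpq}, so Proposition~\ref{prop:chain-torsion-applications}(2) gives $\cOrd_{u,v}^{\Chain}\le\min\{p,q\}-1$, which propagates to $\cOrd_v$ via \eqref{eq:inequalities-of-torsion}. For the lower bound on $\cOrd_{u,v}^{\Chain}$, reducing the defining chain maps modulo $u$ (using $\cCFK^-/u=\CFK^-$) yields $\bF_2[v]$-equivariant chain maps on $\CFK^-(T_{p,q}\#\bar T_{p,q})$ witnessing $\cOrd_{u,v}^{\Chain}\ge\Ord_v(T_{p,q}\#\bar T_{p,q})=\min\{p,q\}-1$ by Corollary~\ref{cor:torsion-order-Tpq}. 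The main obstacle is the lower bound on $\cOrd_v$, which does not follow from \eqref{eq:inequalities-of-torsion}: for this I would exhibit an explicit $v$-torsion class in $\cHFK^-(T_{p,q}\#\bar T_{p,q})=H_*(\cCFK^-(T_{p,q})\otimes\cCFK^-(\bar T_{p,q}))$ of order $\min\{p,q\}-1$, beginning from the cycle $x_0\otimes y_1$ together with the relation $\partial(x_0\otimes y_2)=v^{d_2}(x_0\otimes y_1)+u^{d_3}(x_0\otimes y_3)$ with $d_2=\min\{p,q\}-1$ (by Lemma~\ref{lem:Alex-poly-torus}), then tracking the analogous relations further down the staircase and using the free-resolution fact from part~(1) to run a K\"unneth spectral sequence argument confirming that the resulting class survives to nonzero homology with $v$-torsion order exactly $\min\{p,q\}-1$.
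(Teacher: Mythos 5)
Two of your steps have genuine gaps. First, in part~(1) your deduction of torsion-freeness is a non sequitur: knowing that $\cHFK^-(T_{p,q})\otimes_{\bF_2[u,v]}\bF_2(u,v)$ is one-dimensional says nothing about torsion-freeness (e.g.\ $\bF_2[u,v]\oplus \bF_2[u,v]/(u)$ has rank one and projective dimension one, so neither your localization remark nor your two-term free resolution rules out torsion). What is actually needed is the observation the paper uses: the quotient of the free module on $x_0,\dots,x_{2n}$ by the staircase relations $u^{d_{2k+1}}x_{2k}=v^{d_{2k+2}}x_{2k+2}$ embeds into $\bF_2[u,v]$ (send $x_{2k}$ to the monomial $u^{\sum_{m\le k}d_{2m-1}}v^{\sum_{m>k}d_{2m}}$), and a submodule of a free module is torsion-free. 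This embedding is also what implicitly justifies your part~(2) formulas, so you should make it explicit.

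Second, in part~(4) the crux --- the lower bound $\min\{p,q\}-1\le \cOrd_v(T_{p,q}\# \bar T_{p,q})$, which as you correctly note does not follow from the chain of inequalities --- is not actually proved. The displayed relation $\d(x_0\otimes y_2)=v^{d_2}(x_0\otimes y_1)+u^{d_3}(x_0\otimes y_3)$ only shows $v^{d_2}[x_0\otimes y_1]=u^{d_3}[x_0\otimes y_3]$; it establishes neither that $[x_0\otimes y_1]$ is a torsion class (for that you need $u\cdot(x_0\otimes y_1)=\d(x_0\otimes y_0)$, which you never state, and without torsionness the class is irrelevant to $\Ord_v(\Tor)$), nor the essential nonvanishing $v^{d_2-1}[x_0\otimes y_1]\ne 0$, and the appeal to ``a K\"unneth spectral sequence argument'' is not carried out --- this is exactly where the non-PID behavior of $\bF_2[u,v]$ must be confronted (one has to control $\cHFK^-(T_{p,q})\otimes_{\bF_2[u,v]}\cHFK^-(\bar T_{p,q})$, or argue some other way). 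The paper instead takes the class $x_2\otimes x_1'$, kills it with $v^{p-1}$ outright, and detects $v^{p-2}$-nonvanishing by reducing mod $u$ to $\CFK^-$ and exhibiting a diamond subcomplex; alternatively, your class can be salvaged directly by noting that the $x_0\otimes y_1$-coordinate of any boundary lies in the ideal $(u,v^{d_2})$, which does not contain $v^{d_2-1}$ --- but some such argument must be supplied, and your claim that the torsion order is ``exactly'' $\min\{p,q\}-1$ is likewise unsupported. (Your mod-$u$ reduction giving $\cOrd^{\Chain}_{u,v}\ge \Ord_v(\HFK^-)$ is correct and a nice shortcut, and your direct dual-staircase computation in part~(3) is a legitimate alternative to the paper's citation of Alishahi--Eftekhary, though there the upper bound and the nonvanishing ``parallel count'', like the coverage claim ``symmetric constructions cover each $(i,j)$'' in part~(2), are asserted rather than proved.)
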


\begin{proof}
\emph{Part~\ref{lem-tor-comp:1}}: If $K$ is an L-space knot, then the
complex $\cCFK^-(K)$ can be determined using Ozsv\'{a}th and Szab\'{o}'s
computation of the knot Floer homology of L-space knots, which we summarized
in Lemma~\ref{lem:torsion-order-L-space-knot}. For each generator of
$\CFK^\infty(K)$ over $\bF_2[U,U^{-1}]$, there is a corresponding generator
of $\cCFK^-(K)$ over $\bF_2[u,v]$. For each arrow in $\CFK^\infty(K)$, there
is a corresponding arrow in $\cCFK^-(K)$, which is weighted by
$u^{\alpha}v^\beta$, where $\alpha$ denotes the horizontal change of the
arrow, and $\beta$ the vertical change. If $x_0, x_1,\dots, x_{2n-1}, x_{2n}$
denote the generators of $\cCFK^-(K)$, then the kernel of the differential is
exactly the span of $x_0,x_2,\dots, x_{2n-2},x_{2n}$ over $\bF_2[u,v]$. The
differential introduces the relations
\[
\{u^{d_{2i-1}}\cdot x_{2i-2}=v^{d_{2i}}\cdot x_{2i}: 1\le i\le 2n\}.
\]
It is straightforward to see from this description that $\cHFK^-(K)$ is
torsion-free (there is an injection of $\bF_2[u,v]$-modules into
$\bF_2[u,v]$). It follows from the above relations that, if $K$ is an L-space knot, $\cHFK^-(K)$ is
free if and only if the Alexander polynomial is 1, which implies $K$ is the
unknot, since $K$ is an L-space knot.
See Figure~\ref{fig:6} for an example.

\begin{figure}[ht!]
\centering
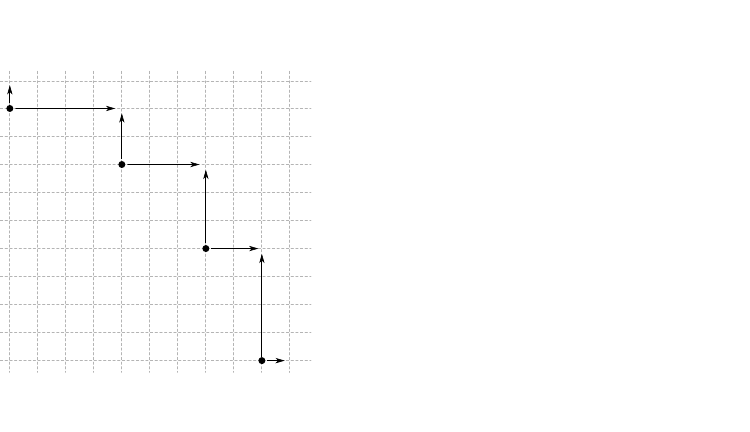
\caption{The complex $\cCFK^-(T_{5,6})$ and its homology $\cHFK^-(T_{5,6})$.
Each dot on the left denotes a generator over $\bF_2[u,v]$.
Each dot on the right denotes a generator over $\bF_2$.
The shaded rectangle is the span of $x_6$ over $\bF_2[u,v]$.}\label{fig:6}
\end{figure}

\emph{Part~\ref{lem:tor-comp:2}}: The homomorphism torsion order can be
rephrased as the minimum $N$ such that if $i$ and $j$ are non-negative
integers with $i+j=N$, then there is a rank 1, free submodule $F\subset
\cHFK^-(K)$ such that $u^iv^j\cdot \cHFK^-(K)\subset F$. For an L-space knot
$K$, the minimal such $N$ is easily seen to be $\frac{1}{2}\cdot
\sum_{i=1}^{2n} d_i$, where $d_i$ denotes the gaps in degrees of the
Alexander polynomial, as in Equation~\eqref{eq:gaps-alexander}. For L-space
knots, this is the Seifert genus of $K$. In particular, if $K=T_{p,q}$, we
obtain the stated formula.

\emph{Part~\ref{lem:tor-comp:3}}: The algebraic computation is performed in \cite{AEUnknotting}*{Example~5.1}.

\emph{Part~\ref{lem:tor-comp:4}}: By Equation~\eqref{eq:inequalities-of-torsion},
Proposition~\ref{prop:chain-torsion-applications}, and Corollary~\ref{cor:torsion-order-Tpq}, we have
\[
\cOrd_{v}(T_{p,q}\# \bar{T}_{p,q})\le \cOrd_{u,v}^{\Chain}(T_{p,q}\# \bar{T}_{p,q})\le
\Fus(T_{p,q}\# \bar{T}_{p,q}) = \min \{p,q\} - 1.
\]
Hence, it is sufficient to show that
\begin{equation}
\min\{p,q\}-1\le \cOrd_{v}(T_{p,q}\# \bar{T}_{p,q}).\label{eq:Ann-torsion-order-inequality-Tpq-Tpq}
\end{equation}
Assume $p<q$ for simplicity.

In Figure~\ref{fig:torusnewtorsion} (left and center), we draw portions of
$\cCFK^-(T_{p,q})$ and $\cCFK^-(\bar{T}_{p,q})$. Consider the element
\[
y:=x_2\otimes x_1'\in \cCFK^-(T_{p,q})\otimes_{\mathbb{F}_2[u,v]}
\cCFK^-(\bar{T}_{p,q})\iso\cCFK^-(T_{p,q}\#\bar{T}_{p,q}).
\]
Note that $\d y=0$.

An easy computation shows that
\[
\d(x_1\otimes x_1'+x_0\otimes x_0')=v^{p-1}\cdot x_2\otimes x_1',
\]
so $v^{p-1}\cdot [y]=0\in \cHFK^-(T_{p,q}\# \bar{T}_{p,q})$.

By Equation~\eqref{eq:CFK=cCFKtensor}, setting $u=0$ induces a chain map
\[
F\colon \cCFK^-(T_{p,q}\# \bar{T}_{p,q})\to \CFK^-(T_{p,q}\# \bar{T}_{p,q}),
\]
and hence an induced map on homology.

The complex $\CFK^-(T_{p,q}\#\bar{T}_{p,q})$ has a diamond shaped subcomplex
generated by $x_2\otimes x_1'$, $x_2\otimes x_2'$, $x_1\otimes x_2'$, and
$x_1\otimes x_1'$, as shown in Figure~\ref{fig:torusnewtorsion}. Moreover, no other differentials map to $x_2\otimes x_1'$.
Consequently, the element $F([y])\in \HFK^-(T_{p,q}\# \bar{T}_{p,q})$ has
$v$-torsion order $p-1$, and hence  $[y]$ must also have $v$-torsion order
$p-1$ in $\cHFK^-(T_{p,q}\# \bar{T}_{p,q})$.
Equation~\eqref{eq:Ann-torsion-order-inequality-Tpq-Tpq} follows, and hence
so does Claim~\ref{lem:tor-comp:4}.
\end{proof}

\begin{figure}[ht!]
\centering
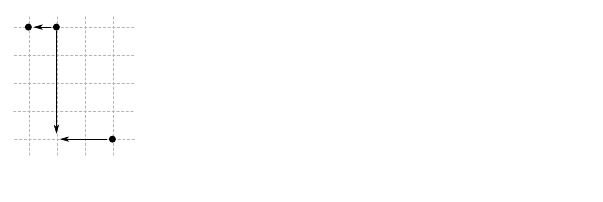
\caption{Portions of $\cCFK^-(T_{p,q})$ (left), $\cCFK^-(\bar{T}_{p,q} )$ (center),
and $\CFK^-(T_{p,q}\# \bar{T}_{p,q})$ (right), when $0<p<q$.}\label{fig:torusnewtorsion}
\end{figure}

Lemma~\ref{lem:generalized-torsion-order-computation} should be compared to the actual values
\[
ul_b(T_{p,q})=(p-1)(q-1) \quad \text{and} \quad \Fus(T_{p,q}\# \bar{T}_{p,q})=\br(T_{p,q})-1=\min \{p,q\}-1,
\]
which follow from Equation~\eqref{eq:relation-between-unlinking-number} and Corollary~\ref{cor:fusion-number-bound}.

\bibliographystyle{custom}
\bibliography{biblio}

%
%
\end{document}